\newtheorem{defi}{Definition}[section]
\newtheorem{coro}[defi]{Corollary}
\newtheorem{theo}[defi]{Theorem}
\newtheorem{prop}[defi]{Proposition}
\newtheorem{lemma}[defi]{Lemma}
\begin{document}
\title{On the intersection graphs of modules and rings}
\author{Jerzy Matczuk, Marta Nowakowska and
Edmund R. Puczy{\l}owski}

\address{Institute of Mathematics, University of Warsaw, Warsaw,
Poland} \email{jmatczuk@mimuw.edu.pl}
\address{Institute of Mathematics, University of Silesia,
Katowice, Poland} \email{mnowakowska@us.edu.pl}
\address{Institute of Mathematics, University of Warsaw, Warsaw,
Poland} \email{edmundp@mimuw.edu.pl}

\thanks{This research was supported by the Polish National Center of
Science Grant No DEC-2011/03/B/ST1/04893}

\begin{abstract}
We classify modules and rings with some specific properties of their intersection graphs. In particular, we describe rings with infinite intersection graphs containing maximal left ideals of finite degree. This answers a question raised in \cite{A2}. We also generalize this result to modules, i.e. we get the structure theorem of modules for which their intersection graphs are infinite and contain maximal submodules of finite degree. Furthermore we omit the assumption of   maximality of submodules and still get a satisfactory characterization of such modules. In addition we show that, if the intersection graph of a module is infinite but its clique number is finite, then the clique and chromatic numbers of the graph coincide. This fact  was known earlier only in some particular cases. It appears that such equality holds also in the complement graph.
\end{abstract}

\maketitle

\section*{Introduction}

There are many studies of various graphs associated to modules or rings (also to some other algebraic structures). Among them the most natural and important for the structure theory of modules and rings seem to be their   intersection graphs. The aim of the paper is to classify modules (and rings) in terms of some specific properties of their intersection graphs.

For a given module $M$ the \it intersection graph \rm  $G(M)$ is defined to be a simple graph (i.e. unweighted, undirected graph containing no graph loops or multiple edges), whose vertices are nontrivial  submodules of $M$ (i.e. distinct from 0 and $M$) and two distinct vertices are adjacent if and only if their intersection is nonzero. The order of $G(M)$, denoted by $|G(M)|$, is defined as the cardinality of the set of vertices of $G(M)$. For a given ring $R$, the intersection graph $G(R)$ of $R$ is defined as the intersection graph of  the left $R$-module $R$. We will also study the {\it complement graph} of $G(M)$, which is denoted by $G^c(M)$, i.e. the simple graph with the vertices those of $G(M)$ and such that any two distinct vertices of $G^c(M)$ are adjacent if and only if they are not adjacent in $G(M)$.

The intersection graphs of modules and rings have been investigated intensively by various authors (see for instance   \cite{A1, A2, C} and the references within). Our initial motivation for the studies presented in this paper was to answer a question raised in \cite{A2}. Namely, in \cite{A2}, the authors obtained a number of results  concerning a description of rings which contain a maximal left ideal $I$ such that $I$, regarded as a vertex of $G(R)$,  is of finite degree, i.e. there are only finitely many left ideals of $R$ which intersect $I$ nontrivially. However they did not get a complete description of such rings and left this as an open problem. This problem turns out to be quite nontrivial and inspiring. It motivates many further natural questions, which concern the structure of modules quite deeply.  It appears also  that some of methods we apply  can be extended much further and they can be used to improve many existing results. For instance we are able to get a satisfactory, as it seems, description of modules having infinite intersection graphs with a finite clique number. The partial known results concerning this topic (cf. \cite{A1}) did not suggest that such description would be possible. Applying this description we prove that the clique and chromatic numbers of such graphs coincide. This again improves known results on this topic. We also study the clique and chromatic numbers of the complement graph of $G(M)$ and generalize known results on them. Some other results are also presented.

The paper is organized as follows. In the preliminary section we fix the notation and recall some notions on graphs, modules and rings  which will be need in the sequel. We also collect here some basic properties of intersection graphs.

In Section 2 we characterize modules $M$  containing  maximal submodule which, as vertices of $G(M)$, has degree smaller than $|G(M)|$. This allows us, in Theorem \ref{auxiliary 3},  to describe modules  having infinitely many submodules and containing  a maximal submodule  of finite degree. Finally, in Theorem \ref{finite degree 2} we relax the maximality of the involved submodule.

In Section 3, applying results from Section 2, we answer the question quoted above. In Theorem \ref{theo1},  we describe all rings $R$ such that $G(R)$ is infinite and $R$ contains a maximal left ideal whose degree is smaller than $|G(R)|$.

The results from Section 2 are also applied in Section 4 where we get, in Theorem \ref{description of deg}, a description of  infinite intersection graphs of modules whose clique number is finite. We show that, for such graphs, the clique and chromatic numbers are equal.  We also prove that the clique and chromatic numbers coincide in the complement graphs.

In the last section we describe certain modules and rings with intersection graphs of particular shape.  We offer (cf. Theorem \ref{triangle free rings}) a complete description of  rings (and their graphs) which are triangle-free.

\section{Preliminaries}

Throughout the paper all rings are associative with unity 1 and all modules are unitary left modules. We follow standard terminology concerning rings, modules and graphs. However to avoid   possible misunderstanding we fix below some terminology and notation.

For a given ring $R$, the \textit{ prime radical} of $R$ is denoted by $\beta (R)$ and the \textit{ socle} of a module $M$ is denoted by $Soc(M)$.   The endomorphism ring of a module $M$ is denoted by $End(M)$. It is well known (Schur's lemma) that if  $M$ is a simple module then $End(M)$ is a division ring. The direct sum of modules is denoted by $\oplus$.

The following lemma plays an important role in our studies. We present its easy proof for the convenience of the reader.

\begin{lemma}\label{auxiliary 1} Let $M$ be a module such that  $M=S\oplus T$, where $S$ is a simple module. Let $\pi_S,\pi_T$ be the canonical projections of $M$ onto $S$ and $T$, respectively. If $N$ is a nonzero submodule of $M$ and $K=\pi_T(N)$, then:
\begin{itemize}
\item[$(i)$]$N\subseteq S\oplus K$ and $N\cap T\subseteq K$;
 \item[$(ii)$] Suppose that $N\cap T=N\cap S=0$. Then there is an isomorphism $f:S\rightarrow K$ such that $N=\{ s+f (s) \mid s\in S\}$. The map $s\mapsto  s+f(s)$ defines an isomorphism of $S$ and $N$. In particular $S\simeq K\simeq N$;
\item[$(iii)$] Suppose that $T$ is simple. Then the map associating to a given isomorphism $f\colon S\rightarrow T$ the submodule $\{ s+f(s) \mid s\in S\}$ gives a bijection between the set $Iso(S,T)$ of all isomorphisms from $S$ to $T$ and the set of all nonzero submodules $N$ of $S\oplus T$ such that $N\cap S=N\cap T=0$. In particular,  $S$ and $ T$  are not isomorphic if and only if the only nontrivial submodules of $S\oplus T$ are $S$ and $T$;

 \item[$(iv)$] Suppose that $S\simeq T$ and $f\colon S\rightarrow T$ is an isomorphism of $S$ and $T$. Then the map $g\mapsto  f\circ g$ gives a bijection between the set $Aut(S)$ of all automorphisms of $S$ and the set $Iso(S,T)$. In particular the cardinality of the set of all nontrivial submodules of $S\oplus T$ is equal to $|Iso(S,T)|+2=|End(S)|+1$, where $End(S)$ is the ring of endomorphisms of $S$.
     \end{itemize}
     \end{lemma}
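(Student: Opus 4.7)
The plan is to handle the four parts in order, each building on the structural observation that for any submodule $N \subseteq S \oplus T$, the restrictions $\pi_S|_N$ and $\pi_T|_N$ have kernels $N \cap T$ and $N \cap S$ respectively, and that simplicity of $S$ forces $\pi_S(N)$ to be either $0$ or $S$.

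For $(i)$, I would just write $n = \pi_S(n) + \pi_T(n)$ for $n \in N$ and observe $\pi_T(n) \in K$; the inclusion $N \cap T \subseteq K$ is immediate because any $n \in N \cap T$ equals $\pi_T(n)$. For $(ii)$, I would first rule out $\pi_S(N) = 0$, since this would give $N \subseteq T$ and hence $N = N \cap T = 0$, contradicting $N \neq 0$; therefore, by simplicity of $S$, $\pi_S|_N \colon N \to S$ is surjective, and its kernel $N \cap T$ vanishes, making it an isomorphism. Then $\pi_T|_N \colon N \to K$ is also an isomorphism (it is surjective by definition and its kernel is $N \cap S = 0$), so I set $f = (\pi_T|_N) \circ (\pi_S|_N)^{-1}$, which automatically satisfies $N = \{s + f(s) \mid s \in S\}$ and gives the stated isomorphisms.

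For $(iii)$, in one direction the graph construction $f \mapsto N_f := \{s + f(s) \mid s \in S\}$ clearly produces a submodule with $N_f \cap S = N_f \cap T = 0$; in the other direction, an $N$ meeting both $S$ and $T$ trivially yields $K = T$ (since $T$ is simple and $K \neq 0$ as above) and $(ii)$ supplies the isomorphism $f$. Injectivity of $f \mapsto N_f$ follows from uniqueness of the direct sum decomposition. For the ``in particular'' clause, I would argue that any nontrivial $N \subseteq S \oplus T$ with $N \cap S \neq 0$ contains $S$ by simplicity, and then $N/S$ is a submodule of $M/S \cong T$, forcing $N = S$ (the case $N/S = T$ yields $N = M$); symmetrically for $N \cap T \neq 0$. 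So when $S \not\simeq T$ no ``diagonal'' submodules exist, leaving only $S$ and $T$.

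For $(iv)$, the map $g \mapsto f \circ g$ is a bijection $Aut(S) \to Iso(S,T)$ with inverse $h \mapsto f^{-1} \circ h$. Combining with $(iii)$, the nontrivial submodules of $S \oplus T$ are $S$, $T$, and the graphs $N_h$ for $h \in Iso(S,T)$, giving the count $|Iso(S,T)| + 2 = |Aut(S)| + 2$; since $S$ is simple, $End(S)$ is a division ring by Schur's lemma, so $|Aut(S)| = |End(S)| - 1$, yielding $|End(S)| + 1$. I do not anticipate a genuinely hard step here — the only place requiring care is making sure in $(ii)$ and $(iii)$ that $K$ is nonzero (and equals $T$ in the simple case) before invoking the graph description.
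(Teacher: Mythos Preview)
Your proof is correct and follows essentially the same route as the paper: in $(ii)$ you invert the isomorphism $\pi_S|_N$ and compose with $\pi_T|_N$ to produce $f$, exactly as the paper does (it writes the inverse as $F(s)=s+f(s)$), and your treatment of $(iii)$ and $(iv)$ matches the paper's graph correspondence and counting argument. Your write-up is in fact slightly more explicit than the paper's in justifying the ``in particular'' clause of $(iii)$ and in spelling out the arithmetic $|Aut(S)|+2=|End(S)|+1$ via Schur's lemma.
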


\begin{proof}
$(i)$ Let $n=s+t\in N\subseteq S\oplus T$, for some $s\in S,\; t\in T$. Then $\pi_T(n)=t$, so $n\in S\oplus K$ i.e. $N\subseteq S\oplus K$.
Now assume that $n=s+t\in N\cap T$. Since $n, t\in T$, $s=n-t\in S\cap T  =0$.  Hence $n=t=\pi_T(n)\in K$ and  $N\cap T\subseteq K$ follows.

$(ii)$ Notice that, as $N\cap T=0$ and $S$ is simple, $\pi_S\colon N\rightarrow S$ is an isomorphism and its inverse $F $ defines  a homomorphism $f\colon S\rightarrow T$ such that $F(s)=s+f(s)$. As $N\cap S=0$, $f$ is a nonzero homomorphism, the simplicity of $S$ yields that $f$ is an isomorphism of $S$ and $K$. Now it is easy to complete the proof of $(ii)$.

$(iii) $ Let $N$ be a nonzero submodule of $M$ such that $N\cap S=0=N\cap T$. Then, as $T$ is simple, $K=\pi_T(N)=S$ and $(ii)$ implies that $N=\{s+f_N(s)\mid s\in S\}$ for a suitable $f_N\in Iso(S,T)$. Conversely, if $g\in Iso(S,T)$, then $N_g=\{s+g(s)\mid s\in S\}$ is a submodule of $M$ with $N_g\cap S=N_g \cap T=0$. Those two assignments give the desired correspondence.

The statement $(iv)$ is a direct consequence of $(iii)$.
\end{proof}

The following corollary to the above lemma, although elementary,  is essential for our considerations. It appears that  modules we are interested in are somehow related to the situation described in it.
\begin{coro}\label{basic ex}
Let  $M=S\oplus S$, where $S$ is a simple module. Then $|G(M)|=|End(M)|+1$. In particular, if the division ring $End(S)$ is infinite, then $M$ is a module which has infinitely many proper submodules and  a maximal submodule $S$ which does not contain  proper submodules.
\end{coro}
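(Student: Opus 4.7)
The plan is to obtain both conclusions as immediate specializations of Lemma~\ref{auxiliary 1}(iv), with $T$ taken to be a second copy of $S$. Schur's Lemma, recalled in the preliminaries, already ensures that $End(S)$ is a division ring, so $|End(S)|$ is well defined; part (iv) of the lemma, applied with $T:=S$, then delivers at once $|G(M)|=|Iso(S,S)|+2=|End(S)|+1$, which is the first assertion (the relevant endomorphism ring here being that of the simple module $S$).

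For the ``in particular'' clause I would argue as follows. Assume $End(S)$ is infinite. The formula just established then forces $|G(M)|$ to be infinite, so $M$ has infinitely many nontrivial, hence proper, submodules. The first summand $N:=S\oplus 0$ is the required maximal submodule: it is maximal because $M/N\simeq S$ is simple, and it has no proper nonzero submodules because $N\simeq S$ is itself simple.

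Essentially no obstacle arises; the corollary is pure bookkeeping on top of the lemma. The only point worth a quick sanity check is that the enumeration used in (iv) really lists every nontrivial submodule of $S\oplus S$ exactly once, and this is precisely what parts (i)--(iii) of the lemma were designed to guarantee: any nonzero proper submodule of $S\oplus S$ either coincides with one of the two summands, or else intersects neither of them and is therefore one of the ``graph'' submodules $\{s+f(s)\mid s\in S\}$ classified in~(iii). Thus no new ideas are needed beyond specializing $T$ to a copy of $S$.
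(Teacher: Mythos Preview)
Your proof is correct and is exactly the argument the paper intends: the corollary is stated without proof immediately after Lemma~\ref{auxiliary 1}, so the paper's implicit proof is simply to specialize part~(iv) with $T=S$, which is precisely what you do. Your parenthetical remark that the endomorphism ring in the displayed formula should be $End(S)$ rather than $End(M)$ is well taken---as written, $|End(M)|+1$ is a typo (for finite $\Delta=End(S)$ one has $|M_2(\Delta)|+1\neq |\Delta|+1$), and Lemma~\ref{auxiliary 1}(iv) indeed gives $|End(S)|+1$.
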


The Goldie dimension plays a quite important role in our studies. We shortly recall some notions and results related to that concept.

A submodule $N$ of a module $M$ is called {\it essential} if $N\cap K\neq 0$, for every nonzero submodule $K$ of $M$. A nonzero module  is called {\it uniform} if all its nonzero submodules are essential.

\textit{The Goldie dimension } $u(M) $ of $M$ is defined as the supremum of cardinalities of   index sets of direct sums of nonzero submodules contained in $M$.

It is said that a set $\{U_t\}_{t\in T}$ of submodules is a $u$\textit{-basis} of $M$ if all $U_t$'s are uniform, the sum $\sum_{t\in T}U_t$ is direct and is an essential submodule of $M$. It is known \cite{GP} that a module $M$ has a $u$-basis if and only if every nonzero submodule of $M$ contains a uniform submodule. Moreover the cardinality of every $u$-basis of $M$ is equal to $u(M)$.


Below we  recall some standard concepts on graphs, which we need in our considerations. Let $G$ be a given graph with the set of all vertices $V(G)$. Our studies mainly concern the following invariants of $G$:

$\circ$ The {\it degree} $deg(v)$ of a vertex $v$ of $G$ is the cardinality of the set of all vertices distinct from $v$ which are adjacent to $v$. The degree of $v$ in  the complement graph $G^c$ is denoted by $deg^c(v)$;

$\circ$ A {\it clique} of $G$ is a  subset $\mathcal C$ of $V(G)$ such that arbitrary two distinct vertices of $\mathcal C$ are adjacent. The {\it clique number} $\omega (G)$ of $G$ is  defined as $sup\{ |\mathcal C| \mid \mathcal C \mbox{ is a clique of }G\}$. The clique number of $G^c$ is denoted by $\omega^c(G)$;

$\circ$ We say that $G$ \textit{can be  colored by elements from the set } $T$, if there exists a surjective map $f:V(G) \rightarrow T$ such that $f(v)\neq f(w)$, for any   adjacent vertices $v, w\in V(G)$. The {\it chromatic number} $\chi (G)$ of $G$ is the cardinality of a set $T$ such that $G$ can be colored by elements from $T$ and $G$ cannot be colored by elements of any set of cardinality smaller than  $|T|$. It is clear that $\omega (G)\leq \chi (G)$. The chromatic number of $G^c$ is denoted by $\chi^c(G)$;

$\circ$ The \textit{girth} $gr(G)$ of the graph $G$ is  the length of a shortest cycle contained in the graph and  $gr(G)=\infty$, if $G$ does not contain any cycles.

In some results the following specific graphs $G$, which we identify by some symbols, will appear. To indicate that a graph is isomorphic to one of the graphs specified below, we just write that they are equal.

$\circ$  $G$ is {\it complete}, if $V(G)$ forms a clique of $G$. We denote it by $\mathcal K_\alpha$, where $\alpha=|G|$.

It is clear that, for a module $M$, the graph $G(M)$ is complete if and only if $M$ is a uniform module.

$\circ$ $G$ is {\it null}, if $G$ has no edges. We denote it by $\mathcal N_\alpha$, where $\alpha=|G|$.

Obviously, for a module $M$, if $G(M)$ is a null graph then all nontrivial submodules of $M$ are simple. This easily implies that $G(M)$ is a null graph if and only if either $M$ contains the unique nontrivial submodule or $M=S_1\oplus S_2$, for some simple submodules $S_1, S_2$ of $M$. In the former case $G(M)=\mathcal N_1=\mathcal K_1$ and in the latter,  $G(M)=\mathcal N_{|Iso (S_1, S_2)|+2}$, by Lemma \ref{auxiliary 1}.

$\circ$ $G$ is {\it star}, if there is $v\in V(G)$ adjacent to all vertices distinct from $v$ and different elements of $V(G)\setminus \{v\}$ are not adjacent. We denote that graph $G$ by $\mathcal S_\alpha$, if $|G|=\alpha$.

$\circ$ $G$ is {\it triangle-free}, if it does not have three distinct   vertices which are adjacent to each other  or, equivalently,   $\omega(G)\leq 2$. If $G$ is triangle-free then $gr(G)\geq 4$.

It is easy to see that, for a given module $M$, the graph $G(M)$ is a star graph if and only if either $M$ contains precisely two nontrivial submodules $N_1\subset N_2$ or $M$ contains a direct sum $S_1\oplus S_2$ of simple modules $S_1, S_2$ and $S_1\oplus S_2$ is the unique maximal submodule of $M$. In the former case $G(M)=\mathcal K_2=\mathcal S_2$ and in the latter $G(M)=\mathcal S_{|Iso(S_1, S_2)|+3}$, by Lemma \ref{auxiliary 1}.

\section{The structure of modules containing a submodule  of finite degree}

For a given nontrivial  submodule $N$ of a module $M$, the {\it degree} $deg(N)$ of $N$ is defined as the degree of $N$ considered as a vertex of $G(M)$, i.e. $deg(N)$ is the cardinality  of the set of all nontrivial submodules $K\neq N$ of $M$ such that $N\cap K\neq 0$. Note that, if $N$ is a submodule of $M$, then $|G(M)|=deg(N)+deg^c(N)+1$. We will use this equation often.

We begin with a characterization of modules containing a nontrivial submodule  of degree 0 or 1.

Note first that, if $N\in V(G(M))$ and $deg(N)=0$, then  both $N$ and $M/N$ are simple modules.  Thus, in the same time, $N$ is  a maximal and  a minimal submodule of $M$. This easily implies that $deg(N)=0$ if and only if $N$ is either the unique nontrivial submodule of $M$ or $N$ is a simple module and $M=N\oplus S$, for a suitable simple submodule $S$ of $M$. In the former case $G(M)=\mathcal K_1$ (then $deg(N)=deg^c(N)=0$) and in the latter, by  Lemma \ref{auxiliary 1},  $G(M)={\mathcal N}_{|Iso(N,S)|+2}$.

Suppose now that $N\in V(G(M))$ and $deg(N)=1$. Then there exists exactly one submodule $N_1$ of $M$ such that $N_1\cap N\neq 0$ and $N_1\neq N$. Clearly, either $N_1\subset N$ or $N\subset  N_1$. In the former case, a nonzero submodule $S$ of $M$ such that $N\cap S=0$ cannot exist, as otherwise $N_1\oplus S$ would be a nontrivial  submodule of $M$ distinct from $N$ and $N_1$ and having nonzero intersection with $N$. This means that $G(M)=\mathcal K_2$, provided $N_1\subset N$.  Suppose  $N\subset  N_1$. Then the  module $N$ has to be   simple and $G(M)=\mathcal K_2$, provided every nonzero submodule of $M$ has nonzero intersection with $N$. If $N\cap S= 0$, for a nonzero submodule $S$, then $(N\oplus S)\cap N\neq 0$ and $N\oplus S\neq M$, as  $N_1 \cap ( N\oplus S)=N$. Hence, the uniqueness of $N_1$ implies that $N_1=N\oplus S$ is the unique maximal submodule of $M$ and $S$ is a simple module. Consequently, applying Lemma \ref{auxiliary 1}, one gets that  $G(M)=\mathcal S_{|Iso(N, S)|+3}$ in this case.

On the other hand it is   clear that,  if $G(M)=\mathcal K_2$ or $G(M)=\mathcal S_\alpha$, then $M$ contains a nontrivial submodule $N$ with $deg(N)=1$. Reassuming, we have proved that: a module $M$ contains a submodule $N$ of degree 1  if and only if $G(M)$ is a star graph.

Now we pass to the study of modules $M$ containing a maximal submodule  of degree smaller than $|G(M)|$. The obtained results extend those in \cite{A1} and play a substantial role in our studies in the next sections (in particular to answer  the  question from \cite{A2}, which was mentioned   in the introduction).

We begin with a result which collects several properties of modules we are interested in.

\begin{prop}\label{auxiliary 2}

Let $T$ be a maximal submodule of a module $M$.
\begin{itemize}
\item[$(1)$] If  $deg(T)<deg^c(T)$ then:

\begin{itemize}

\item[$(i)$] $M=S\oplus T$, for a simple submodule $S$ of $M$;

\item[$(ii)$] $|End(S)|=deg^c(T)$;

\item[$(iii)$] $T$ contains an essential simple submodule $S'$ such that $S\simeq S'$ and $S'$ is the unique simple submodule of $T$;

\item[$(iv)$] If $N$ is a nontrivial submodule of $T$, then $T/N$ does not contain   submodules isomorphic to $S$.

\end{itemize}
\item[$(2)$] Suppose that conditions from $(i)$ to $(iv)$ of $(1)$ hold.  Then:
\begin{itemize}

\item[$(i)$] $Soc(M)=S'\oplus S$ is an essential submodule of $M$, where $S'$ is as in $(1)(iii)$;

\item[$(ii)$] If $N$ is a submodule of $M$ such that $N\cap T\neq 0$, then either $N$ is contained in $T$ or
$N=(N\cap T)\oplus S$;

\item[$(iii)$] $deg(T)=|G(M/S')|+1$;
\item[$(iv)$]  $deg(T)=2|G(T)|=2|G(T/S')|+2$.
\end{itemize}

\end{itemize}
\end{prop}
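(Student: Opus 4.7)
The statement splits into a structural Part (1) that derives the decomposition and properties of $M$ from the inequality $\deg(T)<\deg^c(T)$, and a Part (2) which is a counting exercise given those properties. The central tool throughout is the parametrization, provided by Lemma~\ref{auxiliary 1}, of nontrivial submodules of $M$ not contained in $T$.

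For Part (1)(i), since $\deg^c(T)\geq 1$, there is a nontrivial submodule $N\neq T$ with $N\cap T=0$; by maximality of $T$ we get $M=N\oplus T$, and $N\simeq M/T$ is simple, so $S:=N$ works. For the rest of Part (1), I would classify nontrivial submodules $N$ of $M=S\oplus T$ as Type A ($N\subseteq T$) or Type B ($N\not\subseteq T$). Type B forces $N+T=M$ and $N/(N\cap T)\simeq S$; applying Lemma~\ref{auxiliary 1}(ii) to $M/K=S\oplus T/K$ with $K:=N\cap T$ identifies Type B submodules with pairs $(K,g)$, $K\subsetneq T$, $g\in Hom(S,T/K)$. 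This yields
\begin{equation*}
\deg^c(T)=|Hom(S,T)|, \qquad \deg(T)=|G(T)|+\sum_{0\neq K\subsetneq T}|Hom(S,T/K)|.
\end{equation*}
Since each $|Hom(S,T/K)|\geq 1$, we always have $\deg(T)\geq 2|G(T)|$, with equality precisely when (iv) holds.

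The main obstacle is proving (iv). I would argue by contradiction: suppose some nontrivial $K_0\subsetneq T$ has $T/K_0$ containing a copy of $S$; then $|Hom(S,T/K_0)|\geq |End(S)|$, enlarging $\deg(T)$ by at least $|End(S)|-1$ beyond $2|G(T)|$. On the other hand $\deg^c(T)=|Hom(S,T)|=1+k_0|Aut(S)|$, where $k_0$ counts simple submodules of $T$ isomorphic to $S$. The strict inequality $\deg(T)<\deg^c(T)$ then forces $k_0\geq 2$; but two distinct simples $S_1,S_2\simeq S$ in $T$ yield, via Lemma~\ref{auxiliary 1}(iv), an $S_1\oplus S_2\subseteq T$ with $|End(S)|+1$ further simples isomorphic to $S$, and each one produces another violation of (iv) for $T/S_i$ via $(S_j+S_i)/S_i$; iterating gives a quadratic-type lower bound on $\deg(T)$ that contradicts $\deg(T)<\deg^c(T)$. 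Once (iv) is secured, (iii) follows quickly: two simples $\simeq S$ in $T$ would violate (iv) for $T/S_1$; any nontrivial $L\subseteq T$ with $L\cap S'=0$ would make $(S'+L)/L\simeq S'$ a copy of $S$ in $T/L$, again violating (iv); and a simple $S''\not\simeq S$ in $T$ is ruled out the same way. Then (ii) reads off: $|Hom(S,T)|=1+|Aut(S)|=|End(S)|$.

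For Part (2), I would argue formally from the conditions (i)--(iv). In 2(i), any simple submodule of $M$ is either $S'$ (if in $T$, by uniqueness) or a complement of $T$ (hence $\simeq S$), and by the parametrization together with uniqueness of $S'$ every complement of $T$ lies inside $S\oplus S'$; so $Soc(M)=S\oplus S'$, and essentialness in $M$ follows from essentialness of $S'$ in $T$ via a short case analysis on where a nonzero $N\subseteq M$ sits relative to $T$. For 2(ii), the parametrization combined with (iv) forces $g=0$ for any Type B submodule meeting $T$ nontrivially, so $N=S\oplus(N\cap T)$. For 2(iii) and 2(iv), essentialness of the simple $S'$ in $T$ means every nonzero submodule of $T$ contains $S'$, giving a bijection between $\{N:S'\subseteq N\subsetneq T\}$ and the nontrivial submodules of $T/S'$ together with $S'$ itself, hence $|G(T)|=1+|G(T/S')|$; combining with $\deg(T)=2|G(T)|$ from (iv) yields the chain $\deg(T)=2|G(T)|=2|G(T/S')|+2$ of 2(iv), while a parallel bijection $N\mapsto N/S'$ between nontrivial submodules of $M$ adjacent to $T$ and submodules of $M/S'$ delivers the formula in 2(iii).
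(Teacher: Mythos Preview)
Your parametrization of the submodules $N\not\subseteq T$ by pairs $(K,g)$ with $K=N\cap T\subsetneq T$ and $g\in Hom(S,T/K)$, together with the identities $\deg^c(T)=|Hom(S,T)|$ and $\deg(T)=|G(T)|+\sum_{0\neq K\subsetneq T}|Hom(S,T/K)|$, is correct and is a genuinely different organization from the paper. The paper never writes these formulas; instead it first produces one simple $S'\simeq S$ in $T$ (from $\deg^c(T)\ge 2$), then proves uniqueness by a direct injection: if the family $\mathcal F$ of simple copies of $S$ inside $T$ had at least two members, one picks a fixed-point-free bijection $f\colon\mathcal F\to\mathcal F$ and sends each complement $N\subseteq U\oplus S$ of $T$ (with $U\in\mathcal F$) to $f(U)+N$, obtaining $\deg(T)\ge\deg^c(T)$. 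Essentialness of $S'$ and then (iv) are proved afterwards by similar injection tricks. Your route --- prove (iv) first and read off (iii) and (ii) --- is tidier in principle, and your derivations of (iii) from (iv) and of Part~(2) from (i)--(iv) are fine.

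The gap is the contradiction step for (iv). You correctly get $k_0\ge 2$ from a single failure at $K_0$, but the phrase ``iterating gives a quadratic-type lower bound on $\deg(T)$'' is not a proof. Using only the $|End(S)|+1$ simples inside a fixed $S_1\oplus S_2$ yields at best $\deg(T)\ge 2|G(T)|+|End(S)|^2-1$, which need not dominate $\deg^c(T)=1+k_0(|End(S)|-1)$ when $k_0$ is much larger than $|End(S)|$; and there is no evident ``iteration'' that improves this. The fix is simpler than iteration and sits inside your own framework: once $k_0\ge 2$, \emph{every} $U\in\mathcal F$ has $T/U\supseteq (V+U)/U\simeq S$ for any other $V\in\mathcal F$, so $|Hom(S,T/U)|\ge|End(S)|$ for all $k_0$ of them, and your formula gives
\[
\deg(T)\ \ge\ 2|G(T)|+k_0\bigl(|End(S)|-1\bigr)\ =\ 2|G(T)|-1+\deg^c(T)\ >\ \deg^c(T),
\]
since $|G(T)|\ge k_0\ge 2$. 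You should also record separately that $k_0\ge 1$, i.e.\ that $S'$ exists; this requires $\deg^c(T)\ge 2$, which holds whenever $\deg(T)\ge 1$, and the paper handles $\deg(T)\in\{0,1\}$ by appeal to the discussion preceding the proposition.
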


\begin{proof}
$(1)$ Suppose that $deg(T)<deg^c(T)$. If $deg(T)=0$, then the remarks proceeding the proposition yield that all conditions from $(i)$ to $(iii)$ are satisfied. From those remarks we also get that if $deg(T)=1$, then $deg^c(T)\leq deg(T)$, which is impossible.  Therefore we can assume that $deg(T)\geq 2$.  Since  $deg(T)<deg^c(T)$, we can pick a nonzero submodule $S$ of $M$ such that $S\cap T=0$. Maximality of $T$ implies that $M=S\oplus T$ and $S$ is   simple. In particular    $(1)(i)$ holds.

Since $deg^c(T) > deg(T)\geq 2
$, there exists a nonzero submodule $K$ of $M$ such that $K\neq S$ and $K\cap T=0$. Hence, by Lemma \ref{auxiliary 1}, $T$ contains a simple submodule $S'$ such that $S\simeq S'$.

Let $\mathcal F$ be the family of all simple submodules of $T$ isomorphic to $S$.
 First, making use of   Lemma \ref{auxiliary 1},  let us see  that the family of nonzero submodules of $M=S\oplus T$ with zero intersection with $T$ is equal to $\bigcup_{U\in \mathcal F}{\mathcal F_U}$, where $\mathcal F_U$ is the family of nontrivial submodules of $U\oplus S$, distinct from $U$. Moreover $|\mathcal F_U|=|End(S)|$. Indeed, take a nonzero submodule $N$ of $M$ such that $N\cap T=0$. Lemma \ref{auxiliary 1} shows that $N\subseteq U\oplus S$, where $U=\pi(N)$ and $U\simeq S$. Since $U\subseteq T$ and $N\cap T=0$, $N\not= U$ follows. This shows that any nonzero submodule $N$ of $M$ with $N\cap T=0$ belongs to $  \mathcal F_U$. Conversely, assume now that   $N\in \bigcup_{U\in \mathcal F}{\mathcal F_U}$. Then there exists a simple submodule $U$ of $M$ such that $U\subseteq T$, $N\not=U$ and $N\subseteq U\oplus S$. By modularity, we have $N\cap T\subseteq (U\oplus S)\cap T=(T\cap S)+U=U$. Since $U$ is simple, either $N\cap T=0$ or $N\cap T=U$. If the latter case holds, then $U\subseteq N$. Moreover Lemma \ref{auxiliary 1} implies that all submodules belonging to $\mathcal F_U$ are simple. So $N$ is simple and $U=N$, a contradiction.

Now we will prove that $|\mathcal{F}|=1.$ Assume that $\mathcal F$ contains more than one submodule. Then there is a bijection $f:{\mathcal F}\rightarrow \mathcal F$ such that $f(U)\neq U$, for all $U\in \mathcal F$. For each $U\in \mathcal F$ define $\mathcal G_U=\{ f(U)+N \mid N\in \mathcal F_U\}$. Note that $f(U)+N=f(U)\oplus N$, for all $N\in \mathcal F_U$. Indeed, assume that $f(U)\cap N\not=0$. Since $f(U)$ is simple, $f(U)=f(U)\cap N$. This means that $N\subseteq T$. However $N\in\mathcal F_U$, so $N\cap T=0$, a contradiction.

It is easy to check that, for distinct $N_1, N_2 \in \mathcal F_U$, also $f(U)+N_1$ and $f(U)+N_2$ are distinct. Hence $|\mathcal G_U|=|\mathcal F_U|$. Note that, for distinct $U_1, U_2\in \mathcal F$, the families $\mathcal G_{U_1}, \mathcal G_{U_2}$ are disjoint. Indeed, assume that $f(U_1)+N_1=f(U_2)+N_2$, where $N_i$ is a nontrivial submodule from $\mathcal F_{U_i}$ for $i=1,2$. By the definition of $\mathcal F_{U_i}$ we obtain that $T\cap N_i=0$. Now $f(U_1)=T\cap (f(U_1)+N_1)=T\cap (f(U_2)+N_2)=f(U_2)$, a contradiction. It is clear that, for every $U\in \mathcal F$, all modules in $\mathcal G_U$ have nonzero intersection with $T$. Consequently $deg^c(T) \leq deg(T)$. Hence $\mathcal F =\{S'\}$ and $deg^c(T)=|\mathcal F_{S'}|=|End(S)|$. Thus $(1)(ii)$ holds.

Now we will show that $S'$ is an essential submodule of $T$. Assume that there is a nonzero submodule $U$ of $T$ such that $S'\cap U=0$. Then $(U+S')\cap S=0$. This implies that $U\cap (S\oplus S')=0$. Now, if $K_1, K_2$ are distinct submodules of $S\oplus S'$, then $(K_1+U)\cap (S\oplus S')=K_1\neq K_2=(K_2+U)\cap (S\oplus S')$, so $K_1+U\neq K_2+U$. However $S\oplus S'$ contains $deg^c(T)$ nontrivial, distinct from $S'$, submodules and for each nonzero submodule $K$ of $S\oplus S'$ we have $0\neq U\subseteq (K+U)\cap T$. This contradicts the assumption that $deg(T)<deg^c(T)$. Hence $S'$ is an essential submodule of $T$ and $(1)(iii)$ follows.

Observe that Lemma \ref{auxiliary 1} together with the statement $(1)(ii)$ yield that, if $U$  is any module isomorphic to $S$,  then the cardinality of the set of all nontrivial  submodules of $U\oplus S$ distinct from $U$   is equal to $|End(S)|=deg^c(T)$. Thus, if for a nontrivial submodule $N$ of $T$, the module $T/N$ would contain a submodule isomorphic to $S$, then $T\oplus S$ would contain $deg^c(T)$ submodules containing $N$ and hence we would get that $deg^c(T) \leq deg(T)$, a contradiction. This proves $(1)(iv)$.

(2)  Since $M=T\oplus S$ and $S$ is a simple module, we have   $Soc(M)=Soc(T)\oplus S$. However, by $(1)(iii)$, $Soc(T)=S'$ is an essential submodule of $T$. This gives $(2)(i)$.

 Suppose that $N$ is a submodule of $M$ such that $N\cap T\neq 0$ and $N\not\subseteq T$. If $N\neq (N\cap T)\oplus S$, then obviously $N\cap T\neq T$ and $S\not\subseteq N$. Hence $N/(N\cap T)$ is a nonzero submodule of $(T/(N\cap T))\oplus S$ such that $(N/(N\cap T))\cap (T/(N\cap T))=0$ and $(N/(N\cap T))\cap S=0$. Applying Lemma \ref{auxiliary 1} we get that $T/(N\cap T)$ contains a submodule isomorphic to $S$, which contradicts $(1)(iv)$. This shows $(2)(ii)$.

$(2)(iii)$ is a direct consequence of $(1)(iii)$.  Finally, $(2)(iv)$ is a consequence of $(2)(ii)$ and $(1)(iii)$. This completes the  proof of the proposition.
\end{proof}

Now, with the help of the above proposition,  we are able to  characterize modules $M$ whose intersection graphs are infinite and they contain   maximal submodules of  degree smaller than $|G(M)|$.

\begin{theo}\label{auxiliary 3}

Suppose that $T$ is a maximal submodule of a module $M$. The following conditions are equivalent:

\begin{itemize}
\item[(1)] $|G(M)|$ is infinite and $deg(T)<|G(M)|$;
\item[(2)] The following conditions are satisfied:
\begin{itemize}

\item[$(i)$] $M=S\oplus T$, for a simple submodule $S$ of $M$;

\item[$(ii)$] $T$ contains an essential simple submodule $S'$ such that $S\simeq S'$;
\item[$(iii)$] $|End(S)|=|G(M)|$ is infinite;

\item[$(iv)$] $|G(M/S')|+1<|G(M)|$.
\end{itemize}
\end{itemize}
\end{theo}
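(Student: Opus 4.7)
The plan is to handle the two implications separately, using in both cases the essentiality of $S'$ in $T$ together with infinite-cardinal arithmetic for the graph invariants, and leaning on Proposition \ref{auxiliary 2}.

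For the implication $(1)\Rightarrow (2)$, I would begin with the identity $|G(M)|=\deg(T)+\deg^c(T)+1$. Since $|G(M)|$ is infinite and $\deg(T)<|G(M)|$, standard cardinal arithmetic forces $\deg^c(T)=|G(M)|$, and in particular $\deg(T)<\deg^c(T)$. This places us in the hypothesis of Proposition \ref{auxiliary 2}(1), which immediately delivers (2)(i), the equality $|End(S)|=\deg^c(T)=|G(M)|$ and hence the infinitude part of (2)(iii), and the essential simple $S'\simeq S$ inside $T$ required by (2)(ii). With all four conclusions of Proposition \ref{auxiliary 2}(1) in hand, I can invoke Proposition \ref{auxiliary 2}(2)(iii) to get $\deg(T)=|G(M/S')|+1$; combining with the hypothesis $\deg(T)<|G(M)|$ yields (2)(iv).

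For the implication $(2)\Rightarrow (1)$, infinitude of $|G(M)|$ is an immediate consequence of (iii). For the inequality $\deg(T)<|G(M)|$ I would bypass Proposition \ref{auxiliary 2}(2) and count directly. Let $N$ be any vertex of $G(M)$ adjacent to $T$, that is, a nontrivial submodule with $N\neq T$ and $N\cap T\neq 0$. By (ii), $S'$ is essential in $T$, so the nonzero submodule $N\cap T$ of $T$ has nontrivial intersection with $S'$, and simplicity of $S'$ forces $S'\subseteq N$. Therefore the assignment $N\mapsto N/S'$ is an injection from the neighbourhood of $T$ in $G(M)$ into the set of proper submodules of $M/S'$, whose cardinality is at most $|G(M/S')|+1$ (the nontrivial submodules together with the zero submodule). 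This bounds $\deg(T)\leq |G(M/S')|+1<|G(M)|$ by (iv), as required.

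I expect the main obstacle to be the cardinal-arithmetic step in $(1)\Rightarrow(2)$, where one must carefully justify that an infinite sum $\deg(T)+\deg^c(T)+1=|G(M)|$ with $\deg(T)<|G(M)|$ pins down $\deg^c(T)=|G(M)|$ so that Proposition \ref{auxiliary 2}(1) becomes applicable; by contrast, the counting argument in $(2)\Rightarrow (1)$ is essentially forced once essentiality and simplicity of $S'$ are used to show that every neighbour of $T$ contains $S'$, at which point the quotient $N\mapsto N/S'$ does the bookkeeping for free.
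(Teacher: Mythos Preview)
Your proposal is correct and follows essentially the same route as the paper. In $(1)\Rightarrow(2)$ the paper likewise passes from $|G(M)|=\deg(T)+\deg^c(T)+1$ to $\deg(T)<\deg^c(T)$, applies Proposition~\ref{auxiliary 2}(1) and then (2)(iii) to obtain $\deg(T)=|G(M/S')|+1$; in $(2)\Rightarrow(1)$ the paper uses the same essentiality argument to show every neighbour of $T$ contains $S'$, giving $\deg(T)\le |G(M/S')|+1$.
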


\begin{proof} $(1)\Rightarrow (2)$  Suppose that $|G(M)|$ is infinite and $deg(T)< |G(M)|$. Since $|G(M)|=deg(T)+deg^c(T)+1$ we obtain that $deg(T)<deg^c(T)$. Thus, the assumptions of  Proposition \ref{auxiliary 2} are satisfied and due to statements (1) and $(2)(iii)$ of the proposition, it is enough to show that $|End(S)|=|G(M)|$.

 By Proposition \ref{auxiliary 2} $(1)(ii)$ we have that $deg^c(T)=|End(S)|$. Now, as  $|G(M)|$ is infinite, we have $|G(M)|=deg(T)+deg^c(T)+1=deg(T)+|End(S)|$. Moreover, as $deg(T)<|G(M)|$, the above equality reduces to  $|End(S)|=|G(M)|$.

$(2)\Rightarrow (1)$ Since $S'$ is a simple submodule of $T$, which is essential, we get that all submodules of $M$ with nonzero intersection with $T$ contain $S'$. Hence, by $(iv)$, $deg(T)< |G(M)|.$
\end{proof}

Obviously, the above results apply when $G(M)$ is infinite and $deg(T)$ is finite so we get the following corollary improving the results on this topic presented in \cite{A1, A2}.

\begin{coro}\label{finite degree}
Let $T$ be a maximal submodule of a module $M$. Then the following conditions are equivalent:
\begin{itemize}
\item[(1)] $|G(M)|$ is infinite and $deg(T)<\infty$;
\item[(2)] The following properties hold:
\begin{itemize}
\item[$(i)$] $M=S\oplus T$, for a simple submodule $S$ of $M$;

\item[$(ii)$]  $T$ contains the unique essential simple submodule $S'$ and $S'\simeq S$;

\item[$(iii)$] $End(S)$ is an infinite  division ring;

\item[$(iv)$] $G(M/S')$ is finite.
\end{itemize}
\end{itemize}
\end{coro}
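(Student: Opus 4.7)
The plan is to derive this corollary as an immediate specialization of Theorem \ref{auxiliary 3}: the hypothesis $\deg(T)<\infty$ together with $|G(M)|$ infinite trivially gives $\deg(T)<|G(M)|$, so all of the structural work has already been carried out.

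For $(1)\Rightarrow (2)$, I would invoke Theorem \ref{auxiliary 3} directly to obtain the decomposition $M=S\oplus T$ with $S$ simple, the essential simple submodule $S'\subseteq T$ with $S'\simeq S$, the equality $|End(S)|=|G(M)|$ (which is infinite), and the strict bound $|G(M/S')|+1<|G(M)|$. Schur's lemma promotes "$End(S)$ has infinite cardinality" to "$End(S)$ is an infinite division ring", giving (iii). For (iv), the identity $\deg(T)=|G(M/S')|+1$ of Proposition \ref{auxiliary 2}(2)(iii) combined with the assumed finiteness of $\deg(T)$ yields that $G(M/S')$ is finite. The only mildly subtle point is the \emph{uniqueness} of $S'$ as an essential simple submodule of $T$: this is not stated in Theorem \ref{auxiliary 3}, but comes for free from Proposition \ref{auxiliary 2}(1)(iii), whose hypothesis $\deg(T)<\deg^c(T)$ is entailed here by $|G(M)|=\deg(T)+\deg^c(T)+1$ being infinite while $\deg(T)$ is finite; that proposition in fact asserts that $S'$ is the unique \emph{simple} submodule of $T$, which is a stronger statement.

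For the converse $(2)\Rightarrow (1)$, both conclusions are within easy reach. Since $S\simeq S'$ are simple and both embed in $M$ with $S\oplus S'\subseteq M$, Corollary \ref{basic ex} applied to $S\oplus S'$ gives $|G(S\oplus S')|=|End(S)|+1$, which is infinite by assumption (iii); hence $|G(M)|$ is infinite. For the finiteness of $\deg(T)$, the essentiality of the simple submodule $S'$ in $T$ is the key: any nontrivial submodule $N$ of $M$ with $N\cap T\neq 0$ satisfies $0\neq S'\cap(N\cap T)$, and simplicity of $S'$ forces $S'\subseteq N$. Therefore every vertex of $G(M)$ adjacent to $T$ lies above $S'$ in the submodule lattice, hence corresponds to a submodule of $M/S'$ under the canonical projection; this gives the bound $\deg(T)\leq |G(M/S')|+1<\infty$ by hypothesis (iv).

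The proof is essentially bookkeeping once Theorem \ref{auxiliary 3} and Proposition \ref{auxiliary 2} are in hand, so there is no real technical obstacle. The only care needed is the translation between the cardinal inequality $\deg(T)<|G(M)|$ of the theorem and the finite-degree hypothesis here, and the observation that the "unique essential simple submodule" wording in (2)(ii) is secured by Proposition \ref{auxiliary 2}(1)(iii) rather than directly by the theorem.
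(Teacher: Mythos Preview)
Your proposal is correct and follows exactly the approach the paper intends: the paper presents this corollary without proof, as an immediate specialization of Theorem~\ref{auxiliary 3} (together with Proposition~\ref{auxiliary 2}), and your argument is a careful and accurate unpacking of that deduction, including the point that the uniqueness clause in~(2)(ii) comes from Proposition~\ref{auxiliary 2}(1)(iii) rather than from the theorem itself.
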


     Observe that, due to Lemma \ref{auxiliary 1},    the condition $(iii)$ of the above corollary  can be replaced with the following condition expressed in the language of graphs: \\$(iii')$    $G(S\oplus S)$ is an infinite null graph.

 Notice  that if  $M$ is a module satisfying  the assumptions of Corollary  \ref{finite degree}, then the length $l(M)$ is finite and $|G(T)|<\infty$.
 In fact we have:
 \begin{prop}\label{l(M)}
  Suppose that a module $M$ contains a nonzero submodule $N$ of finite degree in $G(M)$, then $G(N)$ is finite and $M$ is of finite length.
 \end{prop}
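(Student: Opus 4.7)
The plan is to break the statement into three independent pieces: first show that $G(N)$ is finite (so that $N$ has only finitely many submodules), then deduce that $N$ has finite length, then show separately that $M/N$ has finite length, and finally combine via additivity of length on the short exact sequence $0\to N\to M\to M/N\to 0$. The whole argument is bookkeeping about which submodules of $M$ must be adjacent to $N$ in $G(M)$.

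For the finiteness of $G(N)$, I would observe that since $N$ is a vertex of $G(M)$ we have $0\neq N\subsetneq M$, so any nontrivial submodule $K$ of $N$ satisfies $0\neq K\subsetneq N\subsetneq M$; in particular $K\neq 0,M$, so $K$ is itself a vertex of $G(M)$, and $K\cap N=K\neq 0$ with $K\neq N$ shows $K$ is adjacent to $N$ in $G(M)$. This gives an injection of $V(G(N))$ into the neighborhood of $N$ in $G(M)$, so $|V(G(N))|\leq\deg(N)<\infty$. Hence $N$ has only finitely many submodules in total (those from $V(G(N))$ together with $0$ and $N$), which makes $N$ both Artinian and Noetherian, and therefore of finite length.

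For $M/N$, the argument is completely symmetric on the other side. Any proper nontrivial submodule of $M/N$ has the form $K/N$ with $N\subsetneq K\subsetneq M$. Then $K$ is a nontrivial submodule of $M$, and $K\cap N=N\neq 0$ with $K\neq N$ shows $K$ is adjacent to $N$ in $G(M)$. The correspondence $K/N\mapsto K$ is injective into the neighbors of $N$ in $G(M)$, so $M/N$ has at most $\deg(N)+2$ submodules and is therefore also Artinian and Noetherian, hence of finite length.

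Finally, combining these with additivity of length on the short exact sequence $0\to N\to M\to M/N\to 0$ yields $\ell(M)=\ell(N)+\ell(M/N)<\infty$, as required. There is no real obstacle here; the only point that needs a sentence of attention is ruling out $K=M$ in the first injection, and this is automatic because $N\subsetneq M$ already forces any $K\subseteq N$ to satisfy $K\subsetneq M$.
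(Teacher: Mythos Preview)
Your proof is correct and follows exactly the same approach as the paper's own argument: both show that the nontrivial submodules of $N$ and the submodules strictly between $N$ and $M$ are neighbours of $N$ in $G(M)$, conclude that $G(N)$ and $G(M/N)$ are finite, and then use additivity of length. The paper compresses all of this into the single word ``Obviously''; you have simply written out the details.
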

 \begin{proof}
  Obviously, $|G(N)|< \infty$ and $|G(M/N)|<\infty$. In particular,  the modules $N$ and $M/N$ are of finite length and $l(M)=l(N)+l(M/N)$ is also finite, as required.
 \end{proof}

The above  suggests that it would be interesting to  characterize modules of finite length whose intersection graphs are infinite. The module $M$ from Corollary \ref{basic ex} is such and we will see in Theorem \ref{finite lenght} that  modules with the above property are exactly  the ones which have a homomorphic image  containing a submodule isomorphic to $M$.  For doing  so we will need the following lemma:

\begin{lemma}\label{auxiliary 4}
Let $T$ be a maximal submodule of a module $M$.  If $G(T)$ is finite and $G(M)$ is infinite, then:
 \begin{itemize}
   \item[(1)]  $T$ contains a submodule $N$ such that $G(M/N)$ is infinite and $deg(T/N) $ is finite in the graph $G(M/N)$;
   \item[(2)]  $M/N$ contains a submodule $S\oplus S'$, where $S$ and $S'$  are isomorphic simple submodules and $End(S)$ is an infinite division ring.
 \end{itemize}

\end{lemma}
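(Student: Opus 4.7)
My plan is to exploit the finiteness of the submodule lattice of $T$ together with the infinitude of $V(G(M))$ via a pigeonhole argument, and then invoke Corollary \ref{finite degree} on a suitable quotient.

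First, since $G(T)$ is finite, $T$ has only finitely many submodules, so only finitely many vertices of $G(M)$ are contained in $T$. Because $|G(M)|$ is infinite, there are infinitely many submodules $L \in V(G(M))$ with $L \not\subseteq T$; maximality of $T$ forces $L+T=M$ for each such $L$. Since $L \cap T$ ranges over the finite collection of submodules of $T$, the pigeonhole principle yields at least one $N_0 \subseteq T$ for which the family
\[
\mathcal L(N_0) \;=\; \{\, L \subseteq M : L+T=M, \ L\cap T = N_0 \,\}
\]
is infinite. Let $\mathcal N = \{ N_0 \subseteq T : \mathcal L(N_0) \text{ is infinite} \}$; this is a nonempty, finite poset under inclusion. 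The key idea is to take $N$ to be a \emph{maximal} element of $\mathcal N$.

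To verify (1), note that every $L \in \mathcal L(N)$ contains $N = L \cap T$ and is properly contained in $M$, so $L/N$ is a vertex of $G(M/N)$; hence $|G(M/N)|$ is infinite. For the degree bound, a vertex $L/N \neq T/N$ of $G(M/N)$ with $(L/N)\cap(T/N)\neq 0$ corresponds to a submodule $L$ with $N \subsetneq L \subsetneq M$, $L \neq T$, and $L \cap T \supsetneq N$. Such $L$ split into two groups: those with $L \subseteq T$, which are finitely many since $T$ has finitely many submodules; and those with $L \not\subseteq T$, which satisfy $L+T=M$ with $L\cap T = N'$ for some $N'$ properly between $N$ and $T$ (and $N'\neq T$, else $L=T$ or $L=M$). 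There are finitely many such $N'$, and by maximality of $N$ in $\mathcal N$ none of them lies in $\mathcal N$, so each $\mathcal L(N')$ is finite. Summing yields $\deg(T/N) < \infty$ in $G(M/N)$, proving (1).

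For (2), the module $M/N$ has a maximal submodule $T/N$ with $|G(M/N)|$ infinite and $\deg(T/N)$ finite. Applying Corollary \ref{finite degree} to this situation produces a simple submodule $S$ with $M/N = S \oplus (T/N)$, an essential simple submodule $S' \subseteq T/N$ with $S' \simeq S$, and the conclusion that $\operatorname{End}(S)$ is an infinite division ring. Since $S \cap S' \subseteq S \cap (T/N) = 0$, the sum $S + S' = S \oplus S'$ is a submodule of $M/N$ of the required form.

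I expect the main obstacle to be the correct calibration of $N$: choosing it too small fails to cut down $\deg(T/N)$ (some proper overgroup of $N$ in $T$ might still generate infinitely many complements in the quotient), while choosing it too large destroys the infinitude of $G(M/N)$. Selecting $N$ maximal within $\mathcal N$ (rather than minimal or arbitrary) is precisely what balances both requirements, and verifying that this choice indeed excludes case $N' = T$ and the case $L \subseteq T$ from contributing infinitely is the delicate part of the argument.
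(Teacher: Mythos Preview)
Your proof is correct and follows essentially the same route as the paper. The paper defines $\mathcal F=\{N\subseteq T : N$ lies in infinitely many submodules of $M\}$ and picks $N$ maximal there, whereas you work with the slightly smaller family $\mathcal N=\{N_0\subseteq T : \mathcal L(N_0)$ is infinite$\}$; since only finitely many submodules of $M$ lie inside $T$, the two families have the same maximal elements, and both arguments finish with the identical pigeonhole on $L\cap T$ followed by Corollary~\ref{finite degree}.
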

\begin{proof}
Suppose that  $G(M)$ is infinite and $T$ is a maximal submodule of $M$ such that $G(T)$ is finite.

 (1) Let $\mathcal F$ be the family of all submodules $N$ of $T$ such that $N$ is contained in infinitely many submodules of $M$. Clearly, $0\in \mathcal F$, as $G(M)$ is infinite. Moreover $T\not\in \mathcal F$, by the maximality of $T$. Since $G(T)$ is finite, also the family $\mathcal F$ is finite. Thus  we can pick $N\in \mathcal F$, which is maximal in $\mathcal F$. It is clear that $G(M/N)$ is infinite and $G(T/N)$ is finite.

Now we will show that $deg(T/N)<\infty$. Assume that  there are infinitely many submodules $K/N$ of $M/N$ such that $(K/N)\cap (T/N)\neq 0$. Since $G(T/N)$ is finite, $T/N$ contains a nonzero submodule $L/N$ such that $(K/N)\cap (T/N)=L/N$, for infinitely many submodules $K$ of $M$. However this means that $L$ belongs to $\mathcal F$, which contradicts the maximality of $N$. Consequently $deg(T/N)<\infty$.

Obviously, $T/N$ is a maximal submodule of $M/N$. Thus,  applying Corollary \ref{finite degree} to $M/N$ and $T/N$, we get (2).
\end{proof}

\begin{theo}\label{finite lenght}
 For a module  $M$  of  finite length, the following conditions are equivalent:
 \begin{itemize}
   \item[(1)]  $G(M)$ is infinite;
   \item[(2)]   $M$ can be homomorphically mapped onto a module containing a submodule $S\oplus S'$, where $S$ and $S'$ are isomorphic simple submodules and $End(S)$ is an infinite division ring.
   \end{itemize}

\end{theo}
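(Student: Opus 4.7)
The plan is to prove the equivalence in two directions: $(2) \Rightarrow (1)$ follows almost immediately from Corollary \ref{basic ex}, while $(1) \Rightarrow (2)$ proceeds by induction on the length $l(M)$ using Lemma \ref{auxiliary 4} as the structural engine.

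For $(2) \Rightarrow (1)$, I would take a surjection $\varphi \colon M \twoheadrightarrow M'$ whose codomain contains a submodule of the form $S \oplus S'$ as described. By Corollary \ref{basic ex}, $|G(S \oplus S')| = |End(S)| + 1$ is infinite. The preimage assignment $N \mapsto \varphi^{-1}(N)$ is injective from nontrivial submodules of $M'$ into nontrivial submodules of $M$ (nontriviality is preserved because $\varphi$ is surjective, so $\varphi^{-1}(N) \neq M$ unless $N = M'$, and $\varphi^{-1}(N) \supsetneq \ker\varphi$ whenever $N \neq 0$, hence is nonzero). Therefore $|G(M)| \geq |G(M')| \geq |G(S \oplus S')|$, which is infinite.

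For $(1) \Rightarrow (2)$, the plan is to induct on $l(M)$. When $l(M) \leq 1$ the graph $G(M)$ has no vertices, so the hypothesis is vacuous. Otherwise, fix a maximal submodule $T$ of $M$; since $M$ has finite length, $l(T) = l(M) - 1$. Then I split into two cases depending on $G(T)$. If $G(T)$ is finite, Lemma \ref{auxiliary 4} applies directly and produces a submodule $N \subseteq T$ for which $M/N$ contains the required $S \oplus S'$, finishing that case. If instead $G(T)$ is infinite, the inductive hypothesis applied to $T$ yields a surjection $T \twoheadrightarrow T/K$ whose image contains an $S \oplus S'$ of the prescribed form. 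Since $K \subseteq T \subseteq M$, the quotient $M/K$ is defined, the canonical map $M \twoheadrightarrow M/K$ is a surjection of $M$, and $T/K$ sits inside $M/K$ as a submodule, so $M/K$ itself contains the required $S \oplus S'$.

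The heavy structural work is already absorbed in Lemma \ref{auxiliary 4}, so the induction plays a purely organizational role, lifting the conclusion from a maximal submodule up to the whole module. The single point that genuinely matters is the observation that in the induction step the kernel $K \subseteq T$ coming from the hypothesis is automatically a submodule of $M$, allowing the quotient $M/K$ to inherit the desired structure; this is trivial but is exactly the mechanism by which the passage from $T$ to $M$ occurs, and I expect it to be the only delicate bookkeeping in the argument.
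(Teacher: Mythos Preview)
Your proof is correct and rests on the same key tool as the paper, Lemma \ref{auxiliary 4}. The paper organizes the forward direction slightly differently: instead of inducting on $l(M)$ and splitting into cases according to whether $G(T)$ is finite, it directly selects a submodule $T$ maximal among nonzero submodules with $G(T)$ finite, picks $U\supset T$ with $U/T$ simple, and applies Lemma \ref{auxiliary 4} once to the pair $(U,T)$---which is exactly what your induction unwinds to.
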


\begin{proof}
 The implication $(2)\Rightarrow (1)$ is an immediate consequence of Lemma \ref{auxiliary 1}.

$(1)\Rightarrow (2)$ Suppose that the length   of $M$ is finite and $G(M)$ is infinite. Consider the family $\mathcal F$ consisting of all  nonzero submodules  $N$ of $M$ such that $G(N)$ is finite. Since $M$ is of finite length, it contains simple submodules and clearly all of them are in $\mathcal F$. Thus $\mathcal F$ is non-empty. Moreover we can find a submodule $T$, which is maximal in $\mathcal F$. Obviously, $T\neq M$. Since $M$ is of  finite length we can pick a submodule $U$ of $M$ such that $T\subset U$ and $U/T$ is a simple module. By the maximality of $T$ in $\mathcal F$ we have that $G(U)$ is infinite, $T$ is a maximal submodule of $U$ and $G(T)$ is finite. Now, to complete the proof of the theorem,  it suffices to apply Lemma \ref{auxiliary 4} to $U$ and $T$.
\end{proof}

We close this section with a theorem  characterizing modules having infinite intersection graphs and containing a submodule of finite degree.

\begin{theo}\label{finite degree 2}
Let $N$ be a nontrivial submodule of a module $M$. The following conditions are equivalent:

\begin{itemize}
\item[$(1)$] $G(M)$ is infinite and $deg(N)<\infty$;
\item[$(2)$] The following conditions hold:
\begin{itemize}
\item[$(i)$] $N$ contains the unique simple submodule $S$;

\item[$(ii)$] $End(S)$ is infinite;

\item[$(iii)$] $G(M/S)$ is finite or, equivalently, $deg(S)<\infty$;

\item[$(iv)$] There exist submodules $B\subseteq A\subseteq M$ such that $N\cap A=0$ and $A/B\simeq S$.
\end{itemize}
\end{itemize}
\end{theo}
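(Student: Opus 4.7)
The direction $(2)\Rightarrow(1)$ is short. By (i), every submodule $K$ of $M$ meeting $N$ nontrivially must contain the unique simple submodule $S$ of $N$, so the neighbours of $N$ in $G(M)$ correspond to submodules of $M$ containing $S$ other than $N$ and $M$, which biject with submodules of $M/S$; hence $\deg(N)\le|G(M/S)|<\infty$ by (iii). For $|G(M)|=\infty$, use (iv): since $S\subseteq N$ and $B\subseteq A$ with $A\cap N=0$, we have $S\cap B=0$, so the image $\bar S$ of $S$ in $M/B$ is isomorphic to $S$ and disjoint from $A/B\simeq S$. Thus $M/B$ contains a submodule isomorphic to $S\oplus S$, which by Corollary \ref{basic ex} and (ii) has $|\mathrm{End}(S)|+1=\infty$ submodules, each lifting to a distinct submodule of $M$.

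For $(1)\Rightarrow(2)$, Proposition \ref{l(M)} gives $G(N)$ finite and $M$ of finite length, so $\mathrm{Soc}(N)\neq 0$. Fix any simple $S\subseteq N$: submodules of $M$ containing $S$ biject with submodules of $M/S$, and each such $K\ne N,M$ is a neighbour of $N$, so $|G(M/S)|\le\deg(N)<\infty$, which gives (iii) for this $S$. Since $\deg^c(N)=|G(M)|-\deg(N)-1=\infty$, there are infinitely many submodules $K\subseteq M$ with $K\cap N=0$; for each, $K\cap S=0$ and $K+S$ is a supermodule of $S$, of which (iii) allows only finitely many. The pigeonhole principle provides $L\supseteq S$ with infinitely many such $K$'s satisfying $K+S=L$; these are pairwise distinct complements of $S$ in $L$, parametrised by a subset of $\mathrm{Hom}(L/S,S)$. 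Since $L/S$ has finite length it admits only finitely many maximal submodules with quotient isomorphic to $S$, so $|\mathrm{End}(S)|$ must be infinite, proving (ii). For (iv), let $A$ be any such $K$: then $A\cap N=0$, and a nonzero $f\in\mathrm{Hom}(A,S)$ gives $B=\ker f$ with $A/B\simeq S$.

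The crux is uniqueness (i). Assume $N$ contains distinct simple submodules $S_1,S_2$. If $S_1\simeq S_2$, then $S_1\oplus S_2\subseteq N$ has $|\mathrm{End}(S_1)|+1$ submodules by Corollary \ref{basic ex}, infinite by the previous step applied to $S_1$, contradicting $|G(N)|<\infty$. The non-isomorphic case $S_1\not\simeq S_2$ is the main obstacle; the key is the identity $K=(K+S_1)\cap(K+S_2)$ for every $K$ with $K\cap N=0$: writing $x=k_1+s_1=k_2+s_2$ with $k_i\in K$, $s_i\in S_i$, one has $k_1-k_2=s_2-s_1\in K\cap(S_1+S_2)\subseteq K\cap N=0$, whence $k_1=k_2$ and $s_1=s_2\in S_1\cap S_2=0$, so $x=k_1\in K$. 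By (iii) applied to $S_1$ and $S_2$, the pair $(K+S_1,K+S_2)$ takes only finitely many values; hence only finitely many $K$ can be disjoint from $N$, contradicting $\deg^c(N)=\infty$. This joint recovery of $K$ from two quotients is what distinguishes the argument from a naive count over submodules of $M/S_1$ or $M/S_2$ alone, which would not suffice to close this case.
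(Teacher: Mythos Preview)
Your proof is correct modulo one slip in justification, and it takes a genuinely different route from the paper's.

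The slip: when proving (ii) you write ``Since $L/S$ has finite length it admits only finitely many maximal submodules with quotient isomorphic to $S$''. Finite length alone does not give this (consider $S\oplus S$ when $\mathrm{End}(S)$ is infinite). The reason you actually need is the one you have already in hand: by (iii), $G(M/S)$ is finite, so $L/S$, being a submodule of $M/S$, has only finitely many submodules. With this correction the counting argument for $|\mathrm{End}(S)|=\infty$ goes through.

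For $(1)\Rightarrow(2)$ the paper proceeds structurally: it enlarges $N$ to a submodule $N'$ maximal among those containing $N$ with $G(N')$ finite, passes to $U\supset N'$ with $U/N'$ simple and $G(U)$ infinite, then finds $B\subseteq N'$ maximal among submodules contained in infinitely many submodules of $U$, picks $A\subseteq U$ with $A\cap N'=B$ and $A\not\subseteq N'$, and applies Corollary~\ref{finite degree} to the pair $(U/B,\,N'/B)$. The essential simple socle produced by that corollary, pulled back through $(N+B)/B\simeq N$, is the unique simple $S\subseteq N$; existence and uniqueness thus come in one stroke from the earlier machinery. Your argument is more elementary and self-contained: it avoids Corollary~\ref{finite degree} and Proposition~\ref{auxiliary 2} entirely, using only Proposition~\ref{l(M)}, a pigeonhole on the finitely many overmodules of $S$, and the parametrisation of complements of $S$ in $L$ by $\mathrm{Hom}(L/S,S)$. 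Your uniqueness argument via the identity $K=(K+S_1)\cap(K+S_2)$ is clean and, incidentally, already handles the case $S_1\simeq S_2$ as well, so the separate treatment of that case is not strictly necessary. What the paper's approach buys is reuse of its earlier structural results; what yours buys is independence from that machinery and a short combinatorial path.
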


\begin{proof}
$(1)\Rightarrow (2)$ Suppose that $G(M)$ is infinite and let $N$ be a nontrivial submodule of  $M$ with $\deg(N) < \infty$. Then, by Proposition \ref{l(M)}, the length   of $M$ is finite and $N$ has only finitely many submodules. Let $\mathcal{F}$ be the family of submodules containing $N$ and having  only finitely many submodules. Since $N\in\mathcal{F}$ and $M$ is Noetherian,  there exists $N'\in\mathcal{F}$, which is maximal in $\mathcal{F}$. The infinity of  $G(M)$ guarantees that  $N'\neq M$.

Let $U$ be a submodule of $M$ containing $N'$ and such that $U/N'$ is a simple module. The choice of $N'$ yields that $G(U)$ is infinite.

Consider the family $\mathcal G$ of submodules of $N'$ which are contained in infinitely many submodules of $U$. Since $M$ is Noetherian, we can pick a maximal submodule $B$ of $\mathcal G$. Then, by the construction, there exists   a submodule $A$ of $U$ such that $A$ is not contained in $N'$ and $A\cap N'=B$. Then $N'+A=U$, as $U/N'$ is simple.  We will prove  that submodules $A$ and $B$ satisfy conditions of $(iv)$.

Since $deg(N)$ is finite, $N\cap B=0$. Hence $0=N\cap B=N\cap (N'\cap A)=N\cap A$. Note that $U/B=(N'+A)/B\simeq (N'/B)\oplus (A/B)$. In addition $U/N'=(A+N')/N'\simeq A/(A\cap N')=A/B$. Hence $A/B$ is a simple module and $N'/B$ is a maximal submodule of $U/B$.

Notice that $deg(N'/B) $ is finite in $G(U/B)$. Otherwise  there would be infinitely many submodules $X_t/B$ of $U/B$, where $t\in T$, which intersect $N'/B$ nontrivially. However $G(N')$ is finite, so there has to exist a submodule $C$ of $N'$ such that $N'\cap X_t=C$, for infinitely many $t\in T$. Obviously, $B\subset C$ and $C$ is a submodule of $N'$ contained in infinitely many submodules of $U$. This contradicts the maximality of $B$ in $\mathcal G$ and proves that $deg(N'/B) $ is finite in $G(U/B)$. By the construction,  $G(U/B)$ is infinite. We can now apply Corollary \ref{finite degree} to the module $U/B$ and its maximal submodule $N'/B$ to obtain an essential simple submodule $S_0$ of  $N'/B$  which is isomorphic to $A/B$ and has infinite endomorphism ring.

Recall that $N\cap B=0$. Thus, as $S_0$ is simple and essential in $U/B$,  $S_0\subseteq (N+B)/B\simeq N$. This implies that there exists the unique simple submodule $S$ of $N$ such that $A/B\simeq S$ and the division ring $End(S)$ is infinite.  This completes the proof of statements $(i), (ii), (iv)$.

 By $(i) $ we know  that every submodule $K$ of $M$ such that $K\cap N\neq 0$ contain $S$. Hence $|G(M/S)|\leq deg(N)<\infty$, i.e. the condition  $(iii)$ is also fulfilled.

$(2)\Rightarrow (1)$ Conditions  $(i)$ and $(iii)$ imply  that the module  $M$ has finite length and $deg(N)<\infty$.

The statement (2) implies that $End(S)$ is infinite, $S+B/B\simeq S\simeq A/B$ and the sum $(S+B)/B+A/B$ is direct. Now the infinity of  $G(M)$ is a direct consequence of  Theorem \ref{finite lenght}.
\end{proof}

\section{Classification of rings containing maximal left ideals of finite degree}

In this section we classify  all rings $R$ such that $|G(R)|$ is infinite and $R$ contains a maximal left ideal $T$ such that $deg(T)<|G(R)|$. This, in particular, answers a question from \cite{A2} about a characterization of rings containing  maximal left ideals of  finite degree.
To be more precise, this question as it is stated, concerns also rings $R$ with $G(R)$ finite. In \cite{H, S} it was proved that a ring has finitely many left ideals if and only if it is a direct product of a finite number of left Artinian serial rings (i.e. Artinian rings whose left ideals form a chain) and a finite ring. The ideal structure of serial rings is very simple, so it seems that this result is quite satisfactory from the graph theory point of view. A complete classification (up to isomorphism) of all Artinian serial rings is quite involving and treated in many papers, so it is rather hard to expect that one can classify such rings completely. Also a complete classification of simple rings is a rather  hopeless task. Thus it seems that the authors in \cite{A2} had in mind the case when $G(M)$ is infinite, which we treat here. Concerning serial rings and their complexity let us mention the recent paper \cite{JJ} in which there were described  non-commutative rings $R$ such that $\beta (R)$ is a simple $R$-module, $\beta (R)$ is contained in the center of $R$ and $R/\beta (R)$ is a field.

In the sequel we will need the following well known properties of minimal left ideals of rings (cf. \cite{J}).

\begin{lemma}\label{auxiliary 5} Let $R$ be a ring. Then:
\begin{itemize}
\item[$(i)$] (a special case of Nakayama's lemma) If $L$ is a minimal left ideal of $R$, then $\beta (R)L=0$ and $L$ is a simple left $R/\beta (R)$-module;

\item[$(ii)$] If $L$ is a minimal left ideal of  $R$ and $L^2\neq 0$, then $L$ contains an idempotent $e$ such that $L=Re$;

\item[$(iii)$] For given minimal left ideals  $S_1, S_2$ of  $R$ we have:
\begin{itemize}

\item[$(a)$] If $S_1S_2\neq 0$, then $S_1$ and $S_2$ are isomorphic as left $R$-modules;

\item[$(b)$] If $S_1, S_2$ are isomorphic as left $R$-modules then $S_1S_2=0$ if and only if $S_1^2=0$.
\end{itemize}
\end{itemize}

\end{lemma}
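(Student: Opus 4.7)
The plan is to dispatch the three parts separately, each by a short argument combining minimality of the ideal with a Nakayama-type or Schur-type move. For $(i)$, I would first observe that $\beta(R)L$ is a left ideal of $R$ contained in the minimal ideal $L$, so minimality forces it to be either $0$ or $L$. If $\beta(R)L=L$, I pick a nonzero $x\in L$; minimality gives $Rx=L$, whence $\beta(R)x=\beta(R)Rx=\beta(R)L=L$, so some $b\in\beta(R)$ satisfies $bx=x$. Iterating yields $x=b^nx$ for every $n$, and since every element of the prime radical is nilpotent (the standard description of $\beta(R)$ as the set of strongly nilpotent elements), we obtain $x=0$, a contradiction. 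Hence $\beta(R)L=0$, so $L$ carries a natural left $R/\beta(R)$-module structure; its $R/\beta(R)$-submodules coincide with its $R$-submodules, and simplicity transfers at once.

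For $(ii)$, the plan is to exploit $L^2\ne 0$ to pick $x\in L$ with $Lx\ne 0$. Then $Lx$ is a nonzero left ideal contained in $L$, so $Lx=L$ by minimality, and there exists $e\in L$ with $ex=x$. Now $(e^2-e)x=0$, i.e.\ $e^2-e$ lies in $\mathrm{ann}_L(x):=\{a\in L\mid ax=0\}$. This annihilator is a left ideal of $R$ contained in $L$, and it is proper since $e$ acts as the identity on $x\ne 0$; minimality forces $\mathrm{ann}_L(x)=0$, whence $e^2=e$. Finally $Re$ is a nonzero left ideal contained in $L$ (as $e\ne 0$), and a last application of minimality yields $Re=L$.

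For $(iii)(a)$, assuming $S_1S_2\ne 0$ there exists $x\in S_2$ with $S_1x\ne 0$; the map $a\mapsto ax$ is then a nonzero $R$-linear homomorphism from the simple module $S_1$ into the simple module $S_2$ (simplicity of both coming from $(i)$), hence an isomorphism by Schur's lemma. For $(iii)(b)$, I fix an isomorphism $\psi\colon S_1\to S_2$ and exploit the identity $\psi(ab)=a\psi(b)$ for $a,b\in S_1$: if $S_1^2=0$ then $a\psi(b)=0$ for all such $a,b$, and surjectivity of $\psi$ yields $S_1S_2=0$; conversely, $S_1S_2=0$ gives $\psi(ab)=a\psi(b)=0$, so injectivity of $\psi$ forces $ab=0$, i.e.\ $S_1^2=0$. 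The only genuinely delicate ingredient in the whole argument is the nilpotency of every element of $\beta(R)$ used in $(i)$; this is the natural main obstacle for a reader encountering the lemma fresh, but it is entirely standard and I would simply quote it from \cite{J} rather than reprove it.
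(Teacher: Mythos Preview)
The paper does not actually prove this lemma: it introduces the statement as ``well known properties of minimal left ideals of rings (cf.\ \cite{J})'' and gives no argument at all, deferring entirely to Jacobson. Your proof is correct and is the standard textbook approach (Nakayama via nilpotence of elements of the prime radical for $(i)$, Brauer's lemma for $(ii)$, right-multiplication maps plus Schur for $(iii)$), so there is nothing to compare against. One cosmetic remark: in $(iii)(a)$ you write ``simplicity of both coming from $(i)$'', but minimal left ideals are simple left $R$-modules by definition; part $(i)$ is not needed there.
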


The following theorem answers the question from  \cite{A2}, which was discussed    at the beginning of the section.

\begin{theo}\label{theo1}
For a ring $R$, the following conditions are equivalent:
\begin{itemize}
\item [$(1)$] The graph $G(R)$ is infinite and $R$ contains a maximal left ideal $T$ satisfying $deg(T)<|G(R)|$;
\item[$(2)$] Up to isomorphism, $R$ is one of the following rings:
\begin{itemize}
\item[$(i)$]
$R=M_2(\Delta)$ the ring of all  $2$ by $2$ matrices over an infinite division ring $\Delta$. In this case $T$ can be any nontrivial left ideal of $R$ and $deg(T)=0$;

\item[$(ii)$]$R=\left(\begin{array}{*{2}{c}}
\Delta&\Delta\\0& P
\end{array}\right) \subseteq M_2(\Delta)$,
where $P$ is a subring of an infinite division ring $\Delta$ such that the cardinality of the set of left ideals of $P$ is smaller than $|\Delta|$. In this case $T=\left(\begin{array}{*{2}{c}}
0&\Delta\\0& P
\end{array}\right)$ is the unique maximal left ideal of $R$.
\end{itemize}
\end{itemize}
Moreover $R$ contains a maximal left ideal $T$ of finite degree and $G(R)$ is infinite if and only if either  $R$ is as in $(2)(i)$ (so $\deg(T)=0)$ or $R$ is as in $(2)(ii)$ and $P$ is a division subring of $\Delta$
(then $\deg(T)=2$).
\end{theo}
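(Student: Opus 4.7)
The plan is to prove both implications directly and then address the ``moreover'' clause separately.

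For $(2)\Rightarrow(1)$ I would verify each case. In $(2)(i)$ the nontrivial left ideals of $M_2(\Delta)$ biject with $1$-dimensional subspaces of $\Delta^2$ (viewed as row vectors); there are $|\Delta|+1$ of them, all minimal and pairwise trivially intersecting, so $G(R)$ is the null graph on $|\Delta|+1$ vertices and $deg(T)=0$ for every maximal $T$. In $(2)(ii)$, $R/T\cong\Delta$ is simple (so $T$ is maximal) and $S':=\left(\begin{smallmatrix}0&\Delta\\0&0\end{smallmatrix}\right)\cong S:=\left(\begin{smallmatrix}\Delta&0\\0&0\end{smallmatrix}\right)$ is the unique essential simple submodule of $T$, with $|End(S)|=|\Delta|$. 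Proposition~\ref{auxiliary 2}(2)(iv) then gives $deg(T)=2|G(T/S')|+2=2L-2$, where $L$ is the number of left ideals of $P$ (since the $R$-action on $T/S'$ factors through the quotient $R\to P$, the submodules of $T/S'$ coincide with the left ideals of $P$), hence $deg(T)<|\Delta|=|G(R)|$.

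For $(1)\Rightarrow(2)$, apply Theorem~\ref{auxiliary 3} to $M={}_RR$ to obtain $R=S\oplus T$ with $S=Re$ simple (for an idempotent $e$; set $e'=1-e$, so $T=Re'$), an essential simple $S'\subseteq T$ with $S'\cong S$, and $\Delta:=eRe$ an infinite division ring of cardinality $|G(R)|$. If $T$ is itself simple, then $Re\cong Re'$; any such isomorphism yields $v\in eRe'$, $u\in e'Re$ with $uv=e$, $vu=e'$, so $\{e,e',v,u\}$ forms a complete set of $2\times 2$ matrix units and $R\cong M_2(\Delta)$, giving case $(2)(i)$. The hard part is the case where $T$ is not simple, where the key structural step is to show $e'Re=0$. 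Assuming $0\neq x\in e'Re$, we have $Rx=S$ (nonzero submodule of the simple $S$), so $e=sx$ for some $s\in R$; taking $y:=ese'\in eRe'$ and using $e'x=x$ gives $yx=e$. Then $f:=xy\in e'Re'$ is a nonzero idempotent: $f^2=x(yx)y=xey=xy=f$ (using $xe=x$) and $fx=xyx=xe=x\neq 0$. The map $Re\to T$, $re\mapsto ry$, is a nonzero $R$-linear map from the simple $Re$, hence injective with simple image, which must be the unique simple submodule $S'$ of $T$; hence $Rf=Rxy=Sy=Ry=S'$. Now $f$ and $e'-f$ are orthogonal idempotents summing to $e'$, so $T=Re'=Rf\oplus R(e'-f)=S'\oplus R(e'-f)$, and since $T\neq S'$ the summand $R(e'-f)\neq 0$ contradicts essentialness of $S'$. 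Hence $e'Re=0$, and $R=eRe\oplus eRe'\oplus e'Re'$. The same simple-image argument shows $Ry=\Delta y=S'$ for every nonzero $y\in eRe'$, so $eRe'$ is $1$-dimensional over $\Delta$. Fixing such $y$, the relation $yr=\alpha(r)y$ defines a ring homomorphism $\alpha:e'Re'\to\Delta$, injective by essentialness of $S'$: if $\alpha(r)=0$ with $r\neq 0$, then $S'\subseteq Rr$ forces $y\in Rr$, and writing $y=sr$ and extracting the $eRe'$-component via the Peirce decomposition (using $e'Re=0$) places $y$ in $e'Re'$, a contradiction. Setting $P:=\alpha(e'Re')$, the assignment $r_{11}+\beta y+r_{22}\mapsto\left(\begin{smallmatrix}r_{11}&\beta\\0&\alpha(r_{22})\end{smallmatrix}\right)$ is a ring isomorphism $R\cong\left(\begin{smallmatrix}\Delta&\Delta\\0&P\end{smallmatrix}\right)$; the bound $L<|\Delta|$ follows from Theorem~\ref{auxiliary 3}(iv) using the bijection between submodules of $T/S'$ and left ideals of $P$.

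For the ``moreover'' clause, $deg(T)=0$ in $(2)(i)$, while in $(2)(ii)$, $deg(T)=2L-2$ is finite iff $L$ is. Since $P$ is a subring of the division ring $\Delta$ it is a domain, and a domain with finitely many left ideals is left Artinian and hence a division ring; so $P$ must be a division subring of $\Delta$, $L=2$, and $deg(T)=2$. The principal obstacle is the $e'Re=0$ step in the non-simple case: it requires the idempotent $f=xy$ with $Rf=S'$ and the Peirce splitting $Re'=Rf\oplus R(e'-f)$ to force a contradiction with the essentialness of $S'$ in $T$.
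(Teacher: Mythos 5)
Your proof is correct and follows the same overall skeleton as the paper's (reduce via Theorem~\ref{auxiliary 3} to $R=S\oplus T$ with $S'$ essential simple in $T$, then Peirce--decompose with respect to $e$ and exhibit the upper-triangular presentation), but several of the pivotal sub-arguments are genuinely different. Where you split on ``$T$ simple vs.\ not simple,'' the paper splits on $\beta(R)=0$ vs.\ $\beta(R)\ne 0$ (the two dichotomies coincide here); in the semisimple case you build explicit $2\times 2$ matrix units from an isomorphism $Re\cong Re'$, whereas the paper observes that $Soc(R)$ is a semiprime Artinian ring, hence a direct summand, and identifies $R\cong End(S\oplus S')\cong M_2(\Delta)$. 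Most notably, for the key step $e'Re=0$ (equivalently $TS=0$) the paper gets a quick contradiction from Proposition~\ref{auxiliary 2}$(1)(iv)$ (a nonzero $Ts=S$ gives $S\simeq T/N$, forcing $N=0$ and $T$ simple), while you give a purely ring-theoretic argument producing an idempotent $f=xy$ with $Rf=S'$ and the splitting $T=S'\oplus R(e'-f)$, which violates essentiality of $S'$. Your route is more self-contained at the ring level; the paper's is shorter because it recycles the graph-theoretic information already encoded in Proposition~\ref{auxiliary 2}.

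Two small points need patching. First, before fixing a nonzero $y\in eRe'$ you must check $eRe'\ne 0$: if $eRe'=0$ then (with $e'Re=0$) $R=eRe\times e'Re'$, so $e$ acts as the identity on $S$ but annihilates $S'$, contradicting $S\cong S'$; the paper handles this via Lemma~\ref{auxiliary 5}$(iii)(b)$, showing $ST\ne 0$ and hence $eR(1-e)=S'$. Second, in $(2)\Rightarrow(1)$ you invoke Proposition~\ref{auxiliary 2}$(2)(iv)$ to get $deg(T)=2|G(T/S')|+2$, but that statement is only available once all of the hypotheses $(1)(i)$--$(iv)$ of that proposition are verified for the concrete ring (in particular $(1)(iv)$, that $T/N$ contains no copy of $S$ for nontrivial $N$, which holds because $\bigl(\begin{smallmatrix}1&0\\0&0\end{smallmatrix}\bigr)$ annihilates $T/S'$); a direct enumeration of the left ideals meeting $T$, as in the paper, avoids this circularity and gives the same count $2\mathfrak{n}=2L-2$. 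Neither issue affects the validity of the argument once these lines are added.
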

\begin{proof} $(1)\Rightarrow (2)$ Suppose that $R$ contains a maximal left ideal $T$ with $deg(T)<\infty$ and $G(R)$ is infinite.  Obviously, we can apply
the above obtained results on modules to the module $_RR$. By Theorem \ref{auxiliary 3} we have:

$-$ $R=S\oplus T$, where $S$ is a minimal left ideal of $R$;

$-$ $T$ contains the unique minimal left ideal $S'$ of $R$, which is an essential $R$-submodule of $T$ and $S\simeq S'$ as $R$-modules;

$-$ $\Delta=End(_RS)$ is a division ring and $|\Delta|=|G(R)|$ is infinite;

$-$ $Soc(R)=S\oplus Soc(T)=S\oplus S'$ is a two-sided ideal of $R$, which is  an essential left $R$-submodule (left ideal) of $R$.
\vspace{0,5mm}

If $\beta(R)=0$, then  $Soc(R)$ is a semiprime Artinian ring, so it is a ring with unity and is a direct summand of $R$. However $Soc(R)$ is essential in $R$, so $R=Soc(R)$ and $R\simeq End(_RR)=End(S\oplus S')\simeq M_2(\Delta)$.
Every nontrivial left ideal $T$ of $R$ is   maximal, so    $deg(T)=0$. This completes the proof in the case $\beta(R)=0$.

Assume now that $\beta (R)\neq 0$.
 We claim that
 $TS=0$ and $T$ is a two-sided ideal of $R$.
  Assume that $TS\not=0$. Since $S$ is a minimal left ideal of $R$, there is  $s\in S$ such that $Ts=S$. This implies that $S\simeq T/N$, where $N$ is the kernel of the $R$-module epimorphism of $T$ onto $S$ defined by $t\mapsto ts$. Recall that, as $\deg(T)<|G(R)|$, the assumptions of   Proposition \ref{auxiliary 2} are fulfilled and the statement $(1)(iv)$ of the proposition forces $N=0$. Then $T$ would be equal to $S'$ and $R=S\oplus S'$ would be a semisimple ring, which is impossible as  $\beta (R)\neq 0$. This
   proves the claim.

 Now, the  rings $R/T$ and $S$ are isomorphic. Moreover, as  $S$ is a minimal left ideal of $R$,  $R/T$ is a ring  with unity containing no nontrivial left ideals. Thus $S\simeq R/T$ is a division ring. This fact together  with $TS=0$ imply that
   $S\simeq End(_SS)=End(_RS) =\Delta$ is an infinite division ring.
   We will identify $S$ and $\Delta$ using the above isomorphism.  Let $e$ be the
identity element of $S$. Then $e$ is an idempotent of $R$ such that
$S=Re=eRe$ and $T=R(1-e)$. Moreover $S'=T\cap Soc(R)$ is the intersection of two-sided ideals of $R$, so $S'$ itself is a two-sided ideal. Hence $eR(1-e)=eReR(1-e)=ST\subseteq T\cap Soc(R)=S'$. By Lemma \ref{auxiliary 5} $(iii)(b)$ one gets that $ST\neq 0$. Therefore, as $S'$ is a minimal left ideal of $R$,  $eR(1-e)=S'$ follows. Thus we know that
 $S=eR=eRe$ is a division ring with identity $e$ and $S'=eR(1-e)$.

Let $ Q=(1-e)R(1-e)$.
Fix a nonzero element $ m\in eR(1-e)$. It is clear, as $eS'=S'$ is a simple $R$-module and $S$ is a division ring, that $Sm=S'$ and $p'm\ne 0$, for any nonzero $p'\in S$.
Notice also that $mp\in S'=Sm$, for any $p\in Q=(1-e)R(1-e)$. The above implies that, for any $p\in Q$,  there exists uniquely determined element  $f(p)\in S=eRe$ such that $mp=f(p)m$.   It is easy to check that
$f:Q\rightarrow S$ is a ring embedding and $P=f(Q)$ is a domain, as   a subring of the division ring $S$.

 Since $T=R(1-e)=Q\oplus S'$ and   $S'$ is   a two-sided ideal of $R$, we obtain $T/S'$ is isomorphic to $Q$. Hence there exists a bijection between the set of left ideals of rings $P$ and $T/S'$. Applying Theorem \ref{auxiliary 3} and the fact that $G(R)$ is infinite, one gets that the cardinality of the set of left ideals of $P$ is less than $|G(R)|=|End(_RS)|=|\Delta|$.

 Now consider the Pierce decomposition $R=eRe\oplus eR(1-e)\oplus (1-e)R(1-e)=S\oplus S'\oplus Q$ of $R$ with respect the idempotent $e$.  Then every  $r\in R$ can be
uniquely presented in the form $r=d_1+d_2m+f^{-1}(p)$ for some $d_1,d_2\in S=
\Delta$ and $p\in P=f(Q)$. Take another element $r'=d_1'+d_2'm+f^{-1}(p')\in R$ , where $d_1', d_2'\in \Delta$ and $p'\in P$. Then $rr'=d_1d_1'+d_1d_2'm+d_2mf^{-1}(d_1') +f^{-1}(p)f^{-1}(p')= d_1d_1'+(d_1d_2'+d_2d_1')m+f^{-1}(pp')$. Now it is clear the map $\Psi : R \rightarrow \left(\begin{array}{*{2}{c}}
\Delta&\Delta\\0& P
\end{array}\right)$ defined by the formula $\Psi (d_1+d_2m+f^{-1}(p))=\left(\begin{array}{*{2}{c}}
d_1&d_2\\0& p
\end{array}\right)$ is a ring isomorphism of $R$ onto the subring $\left(\begin{array}{*{2}{c}}
\Delta&\Delta\\0& P
\end{array}\right)$ of $M_2(\Delta)$. Note that this isomorphism maps $T$ onto $U=\left(\begin{array}{*{2}{c}}
0&\Delta\\0& P
\end{array}\right)$.
It is not hard to check that $U$ is the only maximal left ideal of $M=\left(\begin{array}{*{2}{c}}
\Delta&\Delta\\0& P
\end{array}\right)$ such that $deg(U)<|G(M)|$.

Assume now that  $deg(T)<\infty$. Then, by Proposition \ref{l(M)}, $R$ is an Artinian ring. Hence
$P\simeq Q  \simeq R/(S\oplus S')$ is an Artinian domain, i.e. $P$
  is a division ring. In this case there are precisely two nontrivial left ideals distinct from $T$, which have nonzero  intersection with $T$, namely $S\oplus S'$ and $S'$. Thus   $deg(T)=2$,   in this case.

$(2)\Rightarrow (1)$ It is clear that if $R$ is isomorphic to $M_2(\Delta)$, where $\Delta$ is an infinite  division ring, then $G(R)$ is infinite and every nontrivial left ideal $T$ of $R$ is a maximal left ideal of $R$ such that $deg(T)=0$.

Assume now that $R$ is as described in $(2)(ii)$ and $T=\left(\begin{array}{*{2}{c}}
0&\Delta\\0& P
\end{array}\right)$.  It is easy to see that  $T$ is a maximal left ideal of $R$ and the nontrivial left ideals of $R$, which are distinct from $T$ and have nonzero intersection with $T$ are precisely ideals of the type $\left(\begin{array}{*{2}{c}}
0&\Delta\\0& L
\end{array}\right)$ and $\left(\begin{array}{*{2}{c}}
\Delta&\Delta\\0& L
\end{array}\right)$, where $L\neq P$   is a left ideal of $P$. The set of  nontrivial left ideals of $R$, which have 0 intersection with $T$ is equal to $\{L_d\mid d\in \Delta\}$, where  $L_d =\{   \left(\begin{array}{*{2}{c}}
x&xd\\0& 0
\end{array}\right) \mid x\in \Delta\} $. Therefore $|G(R)|=|\Delta|$ and  $ \deg  T=2\mathfrak{n}$, where $\mathfrak{n}$ denotes the cardinality of the set of all  left ideals $L$ of $P$, $L\ne P$. By assumption $|\Delta |$ is infinite and  $2\mathfrak{n}<|\Delta |$. Thus $G(R)$ is infinite and     $deg(T)<|G(R)|$. Moreover $\deg T=2$ if and only if $\mathfrak{n}=1$, i.e. $P$ is a division ring. This completes the proof.
\end{proof}

\section{Modules with intersection graphs of finite clique number}


Recall that  $\omega(G(M))$ denotes the clique number of the intersection graph $G(M)$. We will write $\omega(M)$ instead of $\omega(G(M))$.

In the following proposition we collect several properties of modules whose intersection graphs are infinite and have finite clique number. Those properties form a background for a description of all such modules and their graphs.

\begin{prop}\label{auxiliary 6} Let  $M$ be a module such that $G(M)$ is infinite and $\omega (M)<\infty$. Then:

\begin{itemize}
\item[$(i)$] The length $l(M)$ of $M$ is finite and $Soc(M)$ is an essential submodule of $M$;

\item[$(ii)$] There exist simple, isomorphic submodules $S$ and $S'$ of $M$ such that $End(S)$ is an infinite division ring and  $Soc(M)=S\oplus S'$;

\item[$(iii)$] If $N_1, N_2$ are nontrivial submodules of $M$ which do not contain $Soc(M)$, then either $N_1\cap N_2=0$ or $N_1\cap Soc(M)=N_2\cap Soc(M)$ is a simple submodule of $M$;

\item[$(iv)$] Every nontrivial submodule of $M$ either contains $Soc(M)$ or is uniform;

\item[$(v)$] $\mathcal C$ is a maximal clique of $G(M)$ if and only if there is a simple submodule $N$ of $M$ such that ${\mathcal C}={\overline N}$, where ${\overline N}$ is the set of all nontrivial submodules of $M$ containing $N$.
    \end{itemize}
    \end{prop}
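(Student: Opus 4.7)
My plan is to establish parts (i)--(v) in the given order: (i) and (ii) fix the structure of $Soc(M)$, and then (iii)--(v) follow formally from the fact that $Soc(M)$ is short and essential.

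For (i), I would use the observation that any chain of nontrivial submodules of $M$ forms a clique in $G(M)$, since any two nested submodules intersect in the smaller one. Thus $\omega(M)<\infty$ forces both chain conditions, so $l(M)<\infty$; essentiality of $Soc(M)$ then follows because every nonzero submodule of a finite length module contains a simple submodule.

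For (ii), I would invoke Theorem \ref{finite lenght} to locate a submodule $K\subseteq N\subseteq M$ with $N/K\simeq S\oplus S'$, $S\simeq S'$ simple, and $End(S)$ an infinite division ring. If $K\neq 0$, then the preimages in $N$ of the $|End(S)|+1$ distinct nontrivial submodules of $N/K$ all contain $K$, so they form an infinite clique in $G(M)$, contradicting $\omega(M)<\infty$. Hence $K=0$ and $S\oplus S'\subseteq M$. To exclude a further simple summand in the socle, suppose there exists a simple $V\subseteq Soc(M)$ with $V\cap(S\oplus S')=0$. For each of the infinitely many simple submodules $U$ of $S\oplus S'$ supplied by Lemma \ref{auxiliary 1}, the submodules $U\oplus V$ are pairwise distinct (recoverable as $(U\oplus V)\cap(S\oplus S')=U$) and pairwise share $V$, again producing an infinite clique. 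Hence $Soc(M)=S\oplus S'$. This socle rigidity is the main obstacle, because it is the step that really exploits the hypothesis $\omega(M)<\infty$ beyond the consequence of finite length.

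For (iii), once $Soc(M)$ has length $2$, any $N_i$ ($i=1,2$) not containing $Soc(M)$ meets it in a proper submodule of the semisimple $S\oplus S'$, nonzero by essentiality, hence simple. If $N_1\cap N_2\neq 0$, essentiality gives $(N_1\cap N_2)\cap Soc(M)\neq 0$, and this nonzero submodule lies inside both simples $N_i\cap Soc(M)$, forcing the two to coincide. Part (iv) uses the same mechanism: if $N\not\supseteq Soc(M)$, then $V:=N\cap Soc(M)$ is a simple, essential submodule of $N$, so every nonzero submodule of $N$ contains $V$, and any two nonzero submodules of $N$ meet in $V$; hence $N$ is uniform.

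For (v), I would first verify that each $\overline N$ with $N$ simple is a maximal clique: it is a clique since all its elements contain $N$, and maximal because any $K$ adjacent to every element of $\overline N$ must satisfy $K\cap N\neq 0$, hence $N\subseteq K$. Conversely, given a maximal clique $\mathcal C$, if some $K\in\mathcal C$ does not contain $Soc(M)$, then $N:=K\cap Soc(M)$ is simple by (iv); for any $K'\in\mathcal C$ the nonzero $K\cap K'$ meets $Soc(M)$ inside $N$, forcing $N\subseteq K'$, so $\mathcal C\subseteq\overline N$. If instead every $K\in\mathcal C$ contains $Soc(M)$, then any simple $N\subseteq Soc(M)$ satisfies $\mathcal C\subseteq\overline N$. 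In both cases the maximality of $\mathcal C$ yields $\mathcal C=\overline N$.
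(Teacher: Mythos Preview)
Your proof is correct and follows essentially the same approach as the paper's, with one streamlining: in (ii) you apply Theorem~\ref{finite lenght} directly to $M$ (legitimate since $l(M)<\infty$ by (i) and $G(M)$ is infinite), whereas the paper first constructs an intermediate submodule $L$ with $G(L)$ infinite and applies the theorem to $L$---an extra step that is not actually needed. The remaining parts (i), (iii)--(v) match the paper's arguments almost verbatim; just note that in (v) the simplicity of $K\cap Soc(M)$ is really the content of (iii) (or of your argument for (iv)) rather than the statement of (iv) itself.
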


\begin{proof} $(i)$  It is clear that any chain  of nontrivial submodules of $M$  forms  a clique. Thus $l(M)\leq \omega(M)$ is finite. Now every nonzero submodule of $M$ contains a simple submodule, so $Soc(M)$ is essential in $M$ and $(i)$ follows.

$(ii)$ Let $\mathcal R$ be the family of nonzero submodules $U$  of $M$ such that $G(U)$ is finite. Since $l(M)<\infty$, $\mathcal R$ is non-empty (it contains all simple  submodules of $M$) and we can find a submodule $X$, which is maximal in $\mathcal R$. By assumption   $G(M)$ is infinite, so  $X\neq M$. Let $L$ be a submodule of $M$ containing $X$ and such that $L/X$ is a simple module. From the choice of $X$ it follows that $L$ contains infinitely many submodules. By Theorem \ref{finite lenght}, there exists a homomorphic image of $L$, say $L/K$, containing a submodule $S\oplus S'$ such that $S$ and $S'$ are isomorphic simple modules and $End(S)$ is an infinite division ring. Then, by Lemma \ref{auxiliary 1}, $L/K$ contains infinitely many submodules $L_t/K$, where $t$ ranges over some infinite set $T$. Notice that $K$ has to be the zero submodule, as otherwise the modules $L_t$ would  form an infinite clique but we know that $\omega (M)$ is finite.
 Therefore $M$ contains a submodule $S\oplus S'$ such that $S$ and $S'$ are isomorphic simple modules and $End(S)$ is an infinite division ring.

 If  $V$ is a simple submodule of $M$ such that $(S\oplus S')\cap V=0$, then the set $\{U\oplus V \mid U \mbox{ is a nonzero submodule of }S\oplus S'\}$ forms an infinite clique of $G(M)$, a contradiction. This implies that $Soc(M)$  is equal to  $S\oplus S'$ and is an essential submodule of $M$, i.e.   $(ii)$ holds.

$(iii)$ Suppose  that  $N_1, N_2$ are nontrivial submodules of $M$ not containing $Soc(M)$. Then $(ii)$ and Lemma \ref{auxiliary 1} imply that $N_1\cap Soc(M)$ and $N_2\cap Soc(M)$ are simple submodules of $M$. Consequently, if $N_1\cap Soc(M)\neq N_2\cap Soc(M)$, then $N_1\cap N_2\cap Soc(M)=0$. Applying $(ii)$ once again we get that $N_1\cap N_2=0$. Hence $(iii)$ follows.

 $(iv)$   Let us observe that, if  $N_1, N_2$ are nonzero submodules of $M$ and $N_1\cap N_2=0$, then $(ii)$ and the fact that $l(Soc(M))=2$ give that $Soc (M)=(N_1\cap Soc(M))+(N_2\cap Soc (M))\subseteq N_1+N_2$. The above shows that if a  submodule of $M$  is not uniform, then it has to contain  $Soc(M)$. This gives $(iv)$.

$(v)$ Let $N$ be a simple submodule of $M$. It is clear that the set $\overline N$ defined   in $(v)$ is a clique of $G(M)$.  The simplicity of $N$ yields that, for any submodule $K\not\in \overline N$, $K\cap N=0$.  Hence $\overline{N}$ is a maximal clique.

Conversely, suppose that $\mathcal C$ is a maximal clique of $G(M)$.  The statement $(iii)$ implies that there exists a simple submodule $N$ of $M $ contained in every submodule from $\mathcal C$. Therefore $\mathcal C\subseteq \overline N$ and maximality of $\mathcal C$   gives   ${\mathcal C}=\overline N$. This completes the proof.
\end{proof}

\begin{theo}\label{description of deg} For a  module $M$ the following conditions are equivalent:

\begin{itemize}
\item[$(1)$] $G(M)$ is infinite and $\omega (M)$ is finite;
\item[$(2)$] The following conditions hold:
\begin{itemize}

\item[$(i)$] $Soc(M)=S\oplus S'$, where $S$ and $S'$   are isomorphic simple modules such that $End(S)$ is an infinite division ring;

\item[$(ii)$] All but finitely many submodules of $M$ are contained in $Soc(M)$.
    \end{itemize}
    \end{itemize}
\end{theo}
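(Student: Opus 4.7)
The plan is to prove the two directions separately.

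For $(2)\Rightarrow (1)$, infinity of $G(M)$ is immediate from Lemma \ref{auxiliary 1}(iv): $Soc(M)=S\oplus S'$ already contains $|End(S)|+1$ nontrivial submodules, and $|End(S)|$ is infinite by assumption. To establish $\omega(M)<\infty$, I would split the vertices of $G(M)$ into the finitely many submodules not contained in $Soc(M)$ (say $n$ of them, by $(ii)$) and the nontrivial submodules of $M$ that lie inside $Soc(M)$. By Lemma \ref{auxiliary 1}, the latter are either simple or equal to $Soc(M)$; any two distinct simple submodules have zero intersection, so any clique of $G(M)$ contains at most one simple submodule of $Soc(M)$ plus possibly $Soc(M)$ itself. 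This gives $\omega(M)\leq n+2$.

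For the harder direction $(1)\Rightarrow (2)$, Proposition \ref{auxiliary 6}(ii) delivers condition $(2)(i)$ at once, so the real content is $(2)(ii)$. I would argue by contradiction: assume there are infinitely many nontrivial submodules of $M$ not contained in $Soc(M)$. By Proposition \ref{auxiliary 6}(iv) each such submodule is either uniform or contains $Soc(M)$. The submodules of $M$ that contain $Soc(M)$ form a clique in $G(M)$ (pairwise intersections contain $Soc(M)\neq 0$), hence there are at most $\omega(M)$ of them. So infinitely many uniform submodules $U$ of $M$ fall outside $Soc(M)$; each such $U$ has length at least $2$ and simple socle $U\cap Soc(M)$.

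The key step is to extract an infinite clique from these $U$'s, even though those with distinct socles are pairwise disjoint by Proposition \ref{auxiliary 6}(iii). I would proceed by a double pigeonhole. Fix any simple submodule $K\subseteq Soc(M)$. For each simple $N\subseteq Soc(M)$, the uniform submodules of $M$ with socle $N$ all lie inside the clique $\overline{N}$, so there are at most $\omega(M)$ of them; in particular only finitely many of our $U$'s have socle $K$. For the infinitely many remaining ones, $U\cap K=0$ and $U+K\supseteq Soc(U)+K=Soc(M)$ (since $Soc(U)$ and $K$ are distinct simple submodules of the length-$2$ module $Soc(M)$), so $U+K$ runs through the finite set of submodules of $M$ that contain $Soc(M)$. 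A pigeonhole then yields a fixed $B\supsetneq Soc(M)$ for which $B=U\oplus K$ holds for infinitely many of these $U$.

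It remains to show that these complements of $K$ in $B$ form an infinite clique. Fixing one of them as $U_0$, every other complement has the form $U_g=\{u+g(u):u\in U_0\}$ for a unique $g\in\mathrm{Hom}_R(U_0,K)$. For distinct $g_1,g_2$ the map $g_1-g_2\colon U_0\to K$ is a nonzero homomorphism into the simple module $K$, so its kernel has colength one in $U_0$; since $U_0$ is uniform of length at least $2$, this kernel is nonzero, and a direct check shows $U_{g_1}\cap U_{g_2}$ is the image of $\ker(g_1-g_2)$ under $u\mapsto u+g_1(u)$, and hence nonzero. Thus all our $U$'s are pairwise adjacent in $G(M)$, producing an infinite clique and contradicting $\omega(M)<\infty$. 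The main obstacle is precisely this last step: since uniform submodules with distinct socles intersect trivially, no direct edge-count produces a clique, and one has to first cut down to a common ambient $B$ where the complement description of the $U$'s supplies the adjacencies for free.
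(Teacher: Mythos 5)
Your proof is correct, and while the easy direction and the derivation of $(2)(i)$ from Proposition \ref{auxiliary 6} match the paper, your argument for $(2)(ii)$ takes a genuinely different route. The paper also starts by extracting an infinite family $\mathcal F_1$ of submodules not contained in $Soc(M)$ with pairwise distinct (simple) traces on $Soc(M)$, but then it simply shows that the map $F\mapsto F+Soc(M)$ is injective on $\mathcal F_1$ (via a short modularity computation: if $A+Soc(M)=B+Soc(M)$ then $B=(A+Soc(M))\cap B=(A\cap B)+(B\cap Soc(M))=B\cap Soc(M)$, contradicting $B\not\subseteq Soc(M)$), so the sets $F+Soc(M)$ form an infinite clique of submodules containing $Soc(M)$ --- an immediate contradiction. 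You instead turn this observation around: since the submodules containing $Soc(M)$ form a clique and hence are finitely many, you pigeonhole the sums $U+K$ for a fixed simple $K$ into a single $B=U\oplus K$, and then exhibit the infinitely many complements of $K$ in $B$ as graphs of homomorphisms $U_0\to K$, whose pairwise intersections are the (nonzero, by $l(U_0)\geq 2$) kernels of differences. Both arguments are sound; the paper's is shorter and avoids the complement parametrization entirely, while yours has the side benefit of making explicit \emph{where} the infinitely many extra submodules would have to live (as complements of a simple module inside a fixed overmodule of the socle), which is essentially the structural picture behind Lemma \ref{auxiliary 1} and Corollary \ref{basic ex}. Your direct count $\omega(M)\leq n+2$ in the converse direction is also slightly more self-contained than the paper's appeal to Proposition \ref{auxiliary 6}$(v)$.
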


\begin{proof}$(1)\Rightarrow (2)$ Suppose that $G(M)$ is infinite and $\omega (M)$ is finite. The condition $(i)$ was  proved in Proposition \ref{auxiliary 6}, so is is enough to  prove $(ii)$.

For a given submodule $V$ of $M$, the set  of all submodules containing $V$ forms a clique. In particular every such set is finite, as $\omega(M)$ is such.
Assume that the family $\mathcal F$ of submodules of $M$ not contained in $Soc(M)$ is infinite. The above  implies that $\mathcal F$ has to contain an infinite family $\mathcal F_1$ such that, for arbitrary distinct submodules $A,B\in \mathcal F_1$, $A\cap Soc(M)\neq B\cap Soc(M)$.  Proposition \ref{auxiliary 6} $(iii)$ says that in this case we also have  $A\cap B=0$. Since $l(Soc(M))=2$ and $A\cap Soc(M)$ and $B\cap Soc(M)$ are distinct nonzero submodules of $Soc(M)$, we get that  $(A\cap Soc(M))+(B\cap Soc(M))=Soc(M)$. Note that $A+ Soc(M)\neq B+ Soc(M)$, as the equality $A+ Soc(M)=B+ Soc(M)$ would imply that on one hand $ (A+Soc(M))\cap B=(B+Soc(M))\cap B=B$ and on the other $(A+Soc(M))\cap B=(A+(B\cap Soc(M)))\cap B=(A\cap B)+(B\cap Soc(M))=B\cap Soc(M)\neq B$, as by the assumption $B\not\subseteq Soc(M)$. This shows that the set $\{ F+Soc(M) \mid F\in \mathcal F_1\}$ is an infinite clique in $G(M)$, a contradiction. This implies that $\mathcal{F}$ has to be finite, i.e. $(ii)$ holds.

$(2)\Rightarrow (1)$ Suppose that the conditions $(i)$ and $(ii)$ are satisfied. Then Lemma \ref{auxiliary 1} implies that $G(M)$ is infinite.

By Proposition \ref{auxiliary 6}(v), any maximal clique is of the form $\overline N$, where $N$ is a certain simple submodule of $M$. Hence, the property (ii) together with the fact that the only submodules of $Soc(M)$ containing $N$ are just $Soc(M)$ and $N$ itself yield that  $\omega(M)$ is finite.
\end{proof}

Proposition \ref{auxiliary 6} and Theorem \ref{description of deg} allow to describe the structure of $G(M)$ when $G(M)$ is infinite and $\omega(M)$ is finite. This is quite simple. Namely the set of vertices of $G(M)$ is the union of the maximal cliques $\overline N$, where $N$ runs over the family $\mathcal S$ of all nontrivial submodules of $Soc(M)$ (the cardinality of $\mathcal S$ is equal to $|End(S)|+1$, where $S$ is as in Proposition \ref{auxiliary 6}). Let $\mathcal L$ be the family of all proper submodules of $M$ containing $Soc(M)$. Obviously, $\mathcal L$ is finite. By Theorem \ref{description of deg} there are only finitely many nontrivial submodules $N$ of $Soc(M)$ such that ${\overline N}\neq \{ N\} \cup {\mathcal L}$. For all $N\in \mathcal S$, $|{\overline N}|\leq \omega (M)$. Moreover,
 for arbitrary distinct $N_1, N_2\in \mathcal S$, ${\overline N_1}\cap {\overline N_2}=\mathcal L$. Hence all sets ${\overline N}\setminus {\mathcal L}$ are disjoint.\vspace{2mm}

Applying the above remarks we can  get the following theorem. Recall that for a graph $G$,  $\chi(G)$ denotes its chromatic number.  We will write $\chi(M)$ for $\chi(G(M))$.

\begin{theo}\label{chromatic}
Let $M$ be a module such that $G(M)$ is infinite and $\omega (M)<\infty$, then $\omega (M)=\chi (M)$.
\end{theo}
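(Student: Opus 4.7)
The plan is to construct an explicit proper coloring of $G(M)$ using $\omega(M)$ colors; since $\omega(M)\le\chi(M)$ always, this will give equality. Let $k=\omega(M)$, let $\mathcal L$ denote the (finite) family of proper submodules of $M$ containing $Soc(M)$, and recall from the remarks preceding the theorem that every maximal clique of $G(M)$ has the form $\overline N$ for a simple submodule $N\subseteq Soc(M)$, with $\overline{N_1}\cap\overline{N_2}=\mathcal L$ whenever $N_1\ne N_2$. Since $\mathcal L\cup\{N\}$ is itself a clique for any such $N$, one has $|\mathcal L|\le k-1$, and the maximum defining $k=\max_N|\overline N|$ is attained because all but finitely many $N$ satisfy $\overline N=\{N\}\cup\mathcal L$.

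I would then fix a palette of $k$ colors, assign $|\mathcal L|$ of them bijectively to the vertices in $\mathcal L$, and reserve the remaining $k-|\mathcal L|$ colors as a \emph{secondary} palette. For each simple $N\subseteq Soc(M)$ the set $\overline N\setminus\mathcal L$ has at most $k-|\mathcal L|$ elements (since $|\overline N|\le k$), so I can color it injectively from the secondary palette, doing this independently for each $N$; colors will therefore repeat across different $N$'s.

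The crux of the proof is verifying that this repetition is legitimate, i.e.\ that for distinct simple submodules $N_1,N_2$ of $Soc(M)$, no vertex of $\overline{N_1}\setminus\mathcal L$ is adjacent to any vertex of $\overline{N_2}\setminus\mathcal L$. This is exactly the content of Proposition~\ref{auxiliary 6}(iii): if $A\in\overline{N_1}\setminus\mathcal L$ and $B\in\overline{N_2}\setminus\mathcal L$, then neither contains $Soc(M)$, so either $A\cap B=0$ or $A\cap Soc(M)=B\cap Soc(M)$ is a simple submodule of $M$. In the latter case simplicity forces $N_1=A\cap Soc(M)=B\cap Soc(M)=N_2$, a contradiction; hence $A\cap B=0$, so $A$ and $B$ are non-adjacent. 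This is the only step that is not essentially bookkeeping.

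The remaining verifications close up routinely. Any edge $\{A,B\}$ of $G(M)$ lies in a common maximal clique $\overline N$: indeed $A\cap B\ne 0$ and $Soc(M)$ is essential in $M$ by Proposition~\ref{auxiliary 6}(i), so $A\cap B\cap Soc(M)\ne 0$ contains some simple $N\in\mathcal S$, giving $A,B\in\overline N$. Within $\overline N$ the vertices in $\mathcal L$ and those in $\overline N\setminus\mathcal L$ draw from the two disjoint sub-palettes and are each colored injectively, so $\overline N$ is rainbow-colored. Thus no edge is monochromatic, yielding $\chi(M)\le k=\omega(M)$ and completing the proof.
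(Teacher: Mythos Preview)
Your proof is correct and follows essentially the same approach as the paper's: split the vertex set into $\mathcal L$ and the pairwise disjoint pieces $\overline N\setminus\mathcal L$, color $\mathcal L$ injectively, color each $\overline N\setminus\mathcal L$ injectively from the remaining $k-|\mathcal L|$ colors, and invoke Proposition~\ref{auxiliary 6}(iii) to see that vertices in different pieces are non-adjacent. Your write-up is a bit more explicit than the paper's in two places (you justify $|\mathcal L|\le k-1$ and that the maximum $|\overline N|$ is attained, and you add the clean closing observation that every edge lies in some $\overline N$), but the underlying argument is the same.
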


\begin{proof} We follow the notation proceeding the theorem.  We can pick, by  Proposition \ref{auxiliary 6}(v), a submodule  $N\in\mathcal{S}$ such that $\omega (M)=|{\overline N}|$. Then  $|{\overline K}|\leq |{\overline N}|$, for all $K\in \mathcal S$.

   Let us color all vertices in $\overline N$ by distinct colors. Take an arbitrary $N_1\neq N\in \mathcal S$. Then ${\overline N}\cap {\overline N_1}=\mathcal L$ and $|{\overline N}\setminus {\mathcal L}|\geq |{\overline N_1}\setminus {\mathcal L}|$. Color all submodules in ${\overline N_1}\setminus {\mathcal L}$ by arbitrary distinct colors used to color submodules in ${\overline N}\setminus {\mathcal L}$. This is possible as $|{\overline N}\setminus {\mathcal L}|\geq |{\overline N_1}\setminus {\mathcal L}|$. By Proposition \ref{auxiliary 6}(iii), for arbitrary distinct  $N_1, N_2\in \mathcal S$ and $V_1\in {\overline N_1}\setminus {\mathcal L}$, $V_2\in {\overline N_2}\setminus {\mathcal L}$, we have $V_1\cap V_2=0$. Therefore in this way  we have obtained a proper vertex coloring with $|{\overline N}|=\omega (M)$ colors. This forces the equality of  $\chi(M)$ and $\omega (M)$, as $\chi (M)\geq \omega(M)$ always.

\end{proof}

Let us observe that if $M$ is a module such that $G(M)$ is infinite and $\omega (M)<\infty$ then, by Theorem \ref{description of deg}, $M$ contains infinitely many simple submodules. In particular   $\omega(M)\leq\omega^c(M)$. The following result says that this   inequality  yields    equality  $\omega^c (M)=\chi^c (M)$  in slightly more general  setting. Namely:

\begin{theo}\label{dual clique-chromatic}

Let  $M$ be a module with a  $u$-basis (i.e. every nonzero submodule contains a uniform submodule).  Then $\omega^c (M)=\chi^c (M)$, provided $\omega(M)\leq \omega^c(M)$.
\end{theo}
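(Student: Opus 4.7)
My plan is to identify both $\omega^c(M)$ and $\chi^c(M)$ with the cardinality of a certain set of ``types'' of uniform submodules of $M$. First I would define an equivalence relation on uniform submodules by $U_1\sim U_2$ iff $U_1\cap U_2\neq 0$; reflexivity and symmetry are immediate, and transitivity follows from uniformity of $U_2$: if $U_1\cap U_2\neq 0$ and $U_2\cap U_3\neq 0$, then the two nonzero submodules $U_1\cap U_2$ and $U_2\cap U_3$ of the uniform module $U_2$ have nonzero intersection, forcing $U_1\cap U_3\neq 0$. Let $\mathcal{U}$ denote the resulting set of equivalence classes. For each nontrivial submodule $N$ of $M$, I would set $\tau(N)\subseteq\mathcal{U}$ to be the set of classes containing some uniform submodule of $N$; the $u$-basis hypothesis guarantees that $\tau(N)\neq\emptyset$.

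The central observation to be verified is that for nontrivial submodules $N_1,N_2$ of $M$, one has $N_1\cap N_2\neq 0$ if and only if $\tau(N_1)\cap\tau(N_2)\neq\emptyset$. The forward direction uses any uniform submodule of $N_1\cap N_2$; the reverse applies the transitivity argument to uniform submodules $V_i\leq N_i$ lying in a common class. From this I would deduce both bounds. Assigning to each nontrivial $N$ a colour $c(N)\in\tau(N)$ yields a proper colouring of $G^c(M)$, because $c(N_1)=c(N_2)$ forces $\tau(N_1)\cap\tau(N_2)\neq\emptyset$ and hence $N_1\cap N_2\neq 0$, i.e. $N_1,N_2$ are not adjacent in $G^c(M)$. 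This gives $\chi^c(M)\leq|\mathcal{U}|$. Dually, picking one uniform representative from each class yields a family of pairwise disjoint uniform submodules of cardinality $|\mathcal{U}|$.

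Provided $M$ itself is not uniform, every such representative is proper, so the family is a genuine clique in $G^c(M)$ and $\omega^c(M)\geq|\mathcal{U}|$. Combined with the always-valid inequality $\omega^c(M)\leq\chi^c(M)$, this chain gives $|\mathcal{U}|\leq\omega^c(M)\leq\chi^c(M)\leq|\mathcal{U}|$, hence equality. The remaining case, where $M$ is itself uniform, is where the hypothesis $\omega(M)\leq\omega^c(M)$ enters: in this case $G(M)$ is complete, so $\omega(M)=|G(M)|$, while any two nontrivial submodules of $M$ intersect nontrivially and thus $\omega^c(M)\leq 1$; the hypothesis therefore forces $|G(M)|\leq 1$, and the required equality $\omega^c(M)=\chi^c(M)$ is trivial. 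I expect the main technical obstacle to be the careful verification of the adjacency characterisation via $\tau$, since it requires extracting uniform submodules from arbitrary nonzero submodules (relying precisely on the $u$-basis hypothesis) and then juggling the intersection relations among several uniform modules simultaneously.
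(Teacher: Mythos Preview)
Your argument is correct. It is organized differently from the paper's proof, though the underlying idea—exploiting uniform submodules to produce a proper coloring of $G^c(M)$—is shared. The paper chooses, via Zorn's lemma, a maximal clique $\mathcal C=\{U_t\}_{t\in T}$ of uniform vertices in $G(M)$ (so the $U_t$ are pairwise \emph{intersecting}), colors each nontrivial $N$ by the least $t$ with $N\cap U_t\neq 0$, and then closes the chain $\omega^c(M)\le\chi^c(M)\le |\mathcal C|\le\omega(M)\le\omega^c(M)$; the hypothesis $\omega(M)\le\omega^c(M)$ is used precisely at the last inequality. You instead pass to the quotient set $\mathcal U$ of uniform submodules modulo the relation ``nonzero intersection'' and color by equivalence classes; a transversal of $\mathcal U$ is a family of pairwise \emph{disjoint} uniform submodules, which (when $M$ is not uniform) is directly a clique in $G^c(M)$, giving $|\mathcal U|\le\omega^c(M)\le\chi^c(M)\le|\mathcal U|$ without touching $\omega(M)$ at all. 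So your route is a bit more informative: it isolates the hypothesis as relevant only in the degenerate case where $M$ is itself uniform. (In fact even that case does not really need it: if $M$ is uniform then $G^c(M)$ is edgeless, so $\omega^c(M)=\chi^c(M)\in\{0,1\}$ automatically; your invocation of the hypothesis there is correct but stronger than necessary.)
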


\begin{proof} Applying Zorn's lemma we can find, among cliques of $G(M)$ consisting of uniform modules, a maximal one. Say ${\mathcal C}=\{ U_t\}_{t\in T}$ is such a clique of $G(M)$.  We can assume that   $T$ is a  well-ordered set.

 Let $V(M)$ denote the set of all nontrivial submodules of $M$. By assumption $M$ has a $u$-basis. Thus  every $N\in V(M)$ contains an uniform submodule and the choice of  $\mathcal C$ implies that there is $t\in T$ such that $N\cap U_t\neq 0$. Therefore we can define a map  $f\colon V(M)\rightarrow T$ by setting
 $f(N)=\min\{s\in T\mid N\cap U_s\ne 0\}$.  Suppose that $f(N_1)=f(N_2)$. Then $U_{f(N_1)}\cap N_1\neq 0\neq U_{f(N_2)}\cap N_2=U_{f(N_1)}\cap N_2$. Since $U_{f(N_1)}$ is uniform, $U_{f(N_1)}\cap N_1\cap N_2\neq 0$ follows. Hence $N_1\cap N_2\neq 0$ and $N_1, N_2$ are not adjacent in $G^c(M)$. Consequently $T$ gives a proper coloring of $G^c(M)$, so   $\chi^c(M)\le |T|$.
   Therefore, if  $\omega(M)\leq\omega^c(M)$, we obtain $\omega^c(M)\leq \chi^c(M)\leq |T|=|\mathcal{C}| \leq \omega(M)\leq \omega^c(M)$. This gives the required equality.
 \end{proof}
The above theorem, the  remark preceding it and Proposition \ref{auxiliary 6} $(iv)$ give  immediately the following corollary:
\begin{coro}
 Let $M$ be a module such that $G(M)$ is infinite and $\omega (M)<\infty$. Then $\omega^c (M)=\chi^c (M)$.
\end{coro}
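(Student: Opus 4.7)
The plan is to verify that the hypotheses of Theorem \ref{dual clique-chromatic} are satisfied in this more restrictive setting, so the corollary will follow directly. There are two things to check: that $M$ has a $u$-basis, and that $\omega(M) \leq \omega^c(M)$.

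First I would establish that $M$ has a $u$-basis. By Proposition \ref{auxiliary 6}$(iv)$, every nontrivial submodule of $M$ either contains $Soc(M)$ or is uniform. In the first case, such a submodule contains the simple submodule $S$, which is trivially uniform; in the second, it is itself uniform. Hence every nonzero submodule contains a uniform submodule, which by the criterion recalled from \cite{GP} in the Preliminaries is equivalent to $M$ possessing a $u$-basis.

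Next I would verify the inequality $\omega(M) \leq \omega^c(M)$. By Theorem \ref{description of deg}$(i)$, $Soc(M) = S \oplus S'$ with $S \simeq S'$ and $End(S)$ an infinite division ring. Lemma \ref{auxiliary 1}$(iv)$ then yields infinitely many distinct simple submodules of $Soc(M)$, pairwise intersecting trivially (since two distinct simple submodules of a module have zero intersection). This collection forms an infinite independent set in $G(M)$, i.e.\ an infinite clique in $G^c(M)$, so $\omega^c(M) = \infty$. On the other hand $\omega(M)$ is finite by assumption, so $\omega(M) < \omega^c(M)$.

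Finally, with both hypotheses confirmed, Theorem \ref{dual clique-chromatic} applied to $M$ gives $\omega^c(M) = \chi^c(M)$, which is exactly the content of the corollary. No step here is a genuine obstacle; the work is essentially the bookkeeping of matching the hypotheses of the previous theorem and proposition. The only point worth writing out carefully is the verification that $\omega(M) \leq \omega^c(M)$, since it uses the structural description from Theorem \ref{description of deg} rather than an abstract argument.
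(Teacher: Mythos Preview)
Your proof is correct and follows essentially the same route as the paper: the paper also deduces the corollary directly from Theorem \ref{dual clique-chromatic}, using Proposition \ref{auxiliary 6}$(iv)$ to guarantee a $u$-basis and the remark (based on Theorem \ref{description of deg}) that $M$ has infinitely many simple submodules to obtain $\omega(M)\leq\omega^c(M)$. You have simply spelled out in full the details the paper leaves implicit.
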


\section{On some properties of the intersection graphs of modules}

We conclude the paper with   some basic and rather straightforward properties of   intersection graphs of modules and rings, completing some of those in \cite{A1, A2}.

Recall that a graph $G(M)$ is triangle-free if it does not have three distinct vertices which are adjacent to each other  or, equivalently,   $\omega(M)\leq 2$.

\begin{prop}\label{triangle free} For a   module $M$, the following conditions are equivalent:
\begin{itemize}
  \item[(1)]  The graph $G(M)$ is triangle-free;
  \item[(2)] $M$ is   one of the following independent types:
  \begin{itemize}
    \item[$(i)$]  $M$ is a chain module of length $l(M)\leq 3$;
    \item[$(ii)$]  $M=Soc(M)=S_1\oplus S_2$, for some simple submodules $S_1, S_2$ of $M$;
       \item[$(iii)$]$ Soc(M)=S_1\oplus S_2$, is the unique maximal submodule of $M$, where $S_1$ and $ S_2$ are simple submodules of $M$;
  \end{itemize}
   \item[(3)]  $G(M)$ is one of the following graphs:
 $\mathcal N_0,\;\mathcal K_1=\mathcal N_1,\; \mathcal K_2=\mathcal S_2, \;\mathcal N_\alpha,\; \mathcal S_\alpha$, where $\alpha=|G(M)|\geq 3$.
 \end{itemize}

\end{prop}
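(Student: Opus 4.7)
The plan is to prove the cyclic implications $(3) \Rightarrow (1) \Rightarrow (2) \Rightarrow (3)$. The implication $(3) \Rightarrow (1)$ is immediate by inspection: null graphs, single edges, and stars are all triangle-free. For $(2) \Rightarrow (3)$ I would compute $G(M)$ directly in each structural case, relying on Lemma~\ref{auxiliary 1}. A chain module of length $l \le 3$ has exactly $l-1$ nontrivial submodules forming a chain, yielding $\mathcal N_0$, $\mathcal K_1$, or $\mathcal K_2$. For type (ii), Lemma~\ref{auxiliary 1}(iii)(iv) shows that every nontrivial submodule of $S_1 \oplus S_2$ is simple and that two distinct simples inside $S_1 \oplus S_2$ meet trivially, so $G(M) = \mathcal N_\alpha$. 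For type (iii), the nontrivial submodules are $Soc(M)$ together with the simple submodules it contains; $Soc(M)$ is adjacent to all of them while any two distinct simples in $Soc(M)$ meet trivially, giving a star centered at $Soc(M)$.

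For the substantive implication $(1) \Rightarrow (2)$, the opening observation is that any strict chain $N_1 \subsetneq N_2 \subsetneq N_3$ of nontrivial submodules forms a triangle in $G(M)$, so triangle-freeness forces $l(M) \le 3$. I then case-analyze on the pair $(l(M), l(Soc(M)))$. When $l(M) \le 2$ and $M$ is uniserial we are in case (i). When $l(M) = 2$ is non-uniserial, there exist two incomparable nontrivial submodules, which must be simple (forced by $l(M)=2$) with zero intersection (otherwise the intersection would be a common nontrivial subsubmodule, producing a triangle with both of them); hence $M = S_1 \oplus S_2$, case (ii).

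For $l(M) = 3$ I further split by $l(Soc(M))$. If $l(Soc(M)) = 3$, then $M = T_1 \oplus T_2 \oplus T_3$ with simple $T_i$, and the three length-$2$ submodules $T_i \oplus T_j$ are distinct and pairwise meet in a simple, a triangle. If $l(Soc(M)) = 1$, then $M$ is uniform of length $3$; either $M$ is uniserial, giving (i), or $M/Soc(M)$ is a length-$2$ module with two distinct simple submodules, whose preimages $K_1, K_2$ in $M$ satisfy $K_1 \cap K_2 = Soc(M) \ne 0$, producing the triangle $(K_1, K_2, Soc(M))$. If $l(Soc(M)) = 2$, then $Soc(M) = S_1 \oplus S_2$ is a maximal submodule, and I must show it is the \emph{unique} maximal submodule in order to land in (iii). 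Indeed, any other maximal submodule $K$ has length $2$ and cannot be semisimple (else $K \subseteq Soc(M)$, forcing $K = Soc(M)$), so $K$ is uniserial with simple socle $Soc(K) \subseteq Soc(M)$, giving the triangle $(Soc(K), K, Soc(M))$.

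The main technical obstacle is the $l(M) = 3$ analysis, where in each configuration outside of (i) and (iii) one must exhibit the correct triangle: the length-$2$ subsums $T_i \oplus T_j$ in the semisimple case, the preimages in $M$ of the two simples of $M/Soc(M)$ in the uniform case, and $Soc(K)$ for a second maximal submodule $K$ in the remaining case. The mutual exclusivity of types (i), (ii), (iii) is then immediate: (i) is uniserial while (ii) and (iii) have $Soc$ of length $2$, which is not uniform; and (ii) has $M = Soc(M)$ of length $2$ while (iii) has $M \ne Soc(M)$ with $l(M) = 3$.
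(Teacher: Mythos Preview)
Your proof is correct and follows the same cyclic scheme $(3)\Rightarrow(1)\Rightarrow(2)\Rightarrow(3)$ as the paper, with the same use of Lemma~\ref{auxiliary 1} for $(2)\Rightarrow(3)$.

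The organization of $(1)\Rightarrow(2)$ differs somewhat. The paper opens with \emph{two} triangle observations: a chain of three nontrivial submodules gives $l(M)\le 3$, and a direct sum $N_1\oplus N_2\oplus N_3$ (via the pairwise sums $N_i\oplus N_j$) gives Goldie dimension $u(M)\le 2$. It then splits on $u(M)$. In the uniform case $G(M)$ is complete, so triangle-free forces $|G(M)|\le 2$ and $M$ is a chain module in one line. For $u(M)=2$ one has $Soc(M)=S_1\oplus S_2$ at once, and a single argument with the triple $\{N\cap Soc(M),\,Soc(M),\,N\}$ shows every proper $N\ne Soc(M)$ lies inside $Soc(M)$. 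You instead split on the pair $(l(M),l(Soc(M)))$ and construct an explicit triangle in each excluded configuration; this recovers the same conclusions but through more subcases. Your route has the virtue of spelling out the uniform length-$3$ case (where the paper just says ``it is clear''), while the paper's Goldie-dimension bound disposes of the semisimple length-$3$ case before it arises. One very minor point: in your $l(M)=2$ non-uniserial case, two distinct simple submodules automatically meet trivially, so the parenthetical triangle argument there is unnecessary (and would not actually produce three distinct vertices).
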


\begin{proof} $(1)\Rightarrow (2)$ Note that, if a module $M$ contains a direct sum $N_1\oplus N_2\oplus N_3$ of three nonzero submodules $N_1, N_2, N_3$, then the submodules $N_1\oplus N_2, N_2\oplus N_3, N_1\oplus N_3$ form a triangle.  Moreover any chain of submodules of $M$ consisting of three nontrivial submodules forms a triangle in $G(M)$. Consequently, if $G(M)$ is triangle-free, then  the Goldie dimension $u(M)$ of $M$ is not greater than $2$ and $l(M)\leq 3$. Now it is clear that, if $M$ is a uniform module, then $M$ is as described in $(2)(i)$.

Suppose now  that   $u(M)=2$. Since $l(M)\leq 3$, $Soc(M)=S_1\oplus S_2$, for some simple submodules $S_1, S_2$ of $M$. If $M=Soc(M)$, then $(2)(ii)$ holds.

 Assume that $M\ne Soc(M)$.  Let $N$ be a proper submodule of $M$ different from $S_1\oplus S_2$. Considering the set of submodules $\{N\cap (S_1\oplus S_2),S_1\oplus S_2,N\}$ and using that fact that  $G(M)$ is triangle-free, we obtain $N=N\cap (S_1\oplus S_2)\subset  S_1\oplus S_2$. This shows that $S_1\oplus S_2$ is the only maximal submodule of  $M$, i.e. $M$ is as in $(2)(iii)$.

The implication $(2)\Rightarrow (3)$ is a direct consequence of  Lemma \ref{auxiliary 1}, the implication $(3)\Rightarrow (1)$ is clear.
\end{proof}

Recall that $gr(G(M))$ denotes the girth of the graph $G(M)$ and when $G(M)$ is triangle-free then $gr(G(M))\geq 4$.

Proposition \ref{triangle free} gives immediately the following improvement of Theorem 2.5 in \cite{A1}.

\begin{coro} Let  $M$ be a module such that its graph $G(M)$ is triangle-free.  Then $G(M)$ contains no cycles, so   $gr(G(M))=\infty$.
\end{coro}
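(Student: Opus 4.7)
The plan is to reduce the corollary directly to the classification of triangle-free intersection graphs given in Proposition \ref{triangle free}. A cycle in $G(M)$ is either a triangle or a cycle of length at least $4$, so it suffices to rule out the latter in each of the possible shapes the graph can assume.

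First I would invoke Proposition \ref{triangle free}: under the hypothesis that $G(M)$ is triangle-free, $G(M)$ is isomorphic to one of the graphs
\[
\mathcal{N}_0,\quad \mathcal{K}_1 = \mathcal{N}_1,\quad \mathcal{K}_2 = \mathcal{S}_2,\quad \mathcal{N}_\alpha,\quad \mathcal{S}_\alpha \ (\alpha \geq 3).
\]
Then I would inspect each possibility separately. The graphs $\mathcal{N}_0$, $\mathcal{N}_1$ and $\mathcal{N}_\alpha$ are null, hence contain no edges and \emph{a fortiori} no cycles. The graph $\mathcal{K}_2 = \mathcal{S}_2$ consists of a single edge, which is acyclic. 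Finally, in a star graph $\mathcal{S}_\alpha$ every edge is incident with the distinguished central vertex $v$; any cycle would have to revisit $v$, forcing a repeated vertex, which contradicts the definition of a cycle. Consequently $G(M)$ contains no cycles at all.

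Since $gr(G(M)) = \infty$ by definition whenever $G(M)$ has no cycles, the claim follows. There is essentially no obstacle here — the proposition does all the structural work, and the only remaining task is the routine observation that null graphs, a single edge, and stars are acyclic. I would therefore keep the proof to just a few lines of case-checking after the appeal to Proposition \ref{triangle free}.
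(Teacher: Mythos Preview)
Your argument is correct and follows exactly the paper's approach: the paper simply states that the corollary follows immediately from Proposition \ref{triangle free}, and your case-by-case verification that each of the graphs $\mathcal{N}_\alpha$, $\mathcal{K}_1$, $\mathcal{K}_2$, $\mathcal{S}_\alpha$ is acyclic is precisely what makes that ``immediately'' work.
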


   Some results on rings $R$ with  triangle-free graphs $G(R)$  were obtained in Theorem 17 of \cite{A2}. In the following theorem we  classify such graphs completely and describe the respective rings.

\begin{theo}\label{triangle free rings} For a ring $R$ the following conditions are equivalent:

\begin{itemize}
  \item[(1)]  $G(R)$ is triangle-free;
  \item[(2)]  $R$ is one of the following independent types:
  \begin{itemize}
  \item[$(i)$]  Either $R$ is a division ring or $R$ contains precisely one nontrivial left ideal (equal to the prime radical   $\beta (R)$) or $R$ contains precisely two left ideals $\beta (R)$, $(\beta (R))^2$. Moreover, the graph $G(R)$ is equal to $\mathcal{N}_0$, $\mathcal{K}_1$ and $\mathcal{K}_2$, respectively;

\item[$(ii)$] $R$ is isomorphic either to $M_2(\Delta)$ or to $\Delta\oplus \Delta$ where $\Delta$ is a division ring. In the former case $G(R)={\mathcal N_{|\Delta |+1}}$ and in the latter $G(R)={\mathcal N_2}$;

    \item[$(iii)$] $R/\beta (R)=\Delta$ is a division ring, $(\beta (R))^2=0$  and $\beta(R)=S_1\oplus S_2$, where $S_1, S_2$ are minimal left ideals both isomorphic to $\Delta$ as left $R$-modules (and also as $\Delta$-modules). In this case $G(M)={\mathcal S_{|\Delta|+2}}$.
        \end{itemize}
\end{itemize}

        \end{theo}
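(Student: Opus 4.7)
The plan is to apply Proposition \ref{triangle free} to the left $R$-module ${}_RR$, which divides the analysis into three cases corresponding to $(2)(i)$--$(iii)$ of that proposition: (a) $R$ has a chain of left ideals of length at most $3$; (b) $R = Soc(R) = S_1 \oplus S_2$ is a direct sum of two simple left ideals; or (c) $Soc(R) = S_1 \oplus S_2$ is the unique maximal left ideal of $R$. The forward direction $(1) \Rightarrow (2)$ will be handled case by case, and the reverse direction $(2) \Rightarrow (1)$ will follow from a routine inspection of the left-ideal lattice in each listed ring.

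In case (a), length $1$ means $R$ is simple as a left module, hence a division ring with $G(R) = \mathcal{N}_0$. When the length is $2$, the unique nontrivial left ideal $I$ cannot take the form $Re$ for a nontrivial idempotent, since the decomposition $R = Re \oplus R(1-e)$ would violate the chain property; thus Lemma \ref{auxiliary 5}(ii) forces $I^2 = 0$, whence $I \subseteq \beta(R)$, and maximality yields $I = \beta(R)$ and $G(R) = \mathcal{K}_1$. In the length-$3$ case with chain $0 \subset I_1 \subset I_2 \subset R$, the unique maximal left ideal $I_2$ equals $\beta(R)$ (as $R$ is Artinian and $I_2$ is simultaneously prime and nilpotent by the chain argument above); to identify $I_1 = \beta(R)^2$, I would rule out $\beta(R)^2 = 0$ by a vector-space count: otherwise $I_2$ would be a $\Delta$-vector space ($\Delta := R/\beta(R)$) whose nontrivial subspaces form the chain $\{I_1\}$, but any $\Delta$-vector space has either $0$ (dimension $\leq 1$) or at least $|\Delta|+1 \geq 2$ (dimension $\geq 2$) nontrivial subspaces. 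Combined with Nakayama's lemma ruling out $\beta(R)^2 = I_2$, this forces $\beta(R)^2 = I_1$ and $G(R) = \mathcal{K}_2$.

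In case (b), $R = Soc(R)$ of length two makes $R$ semisimple Artinian as a ring, so Wedderburn--Artin forces $R \simeq M_2(\Delta)$ or $R \simeq \Delta_1 \oplus \Delta_2$ for division rings $\Delta, \Delta_i$. A direct listing of left ideals (with Lemma \ref{auxiliary 1}(iv) counting the $|\Delta|+1$ minimal left ideals of $M_2(\Delta)$) recovers $G(R) = \mathcal{N}_{|\Delta|+1}$ and $G(R) = \mathcal{N}_2$ respectively.

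In case (c), the unique maximal left ideal equals $\beta(R) = Soc(R) = S_1 \oplus S_2$, so $\Delta := R/\beta(R)$ is a division ring (being a simple ring with no nontrivial left ideals). Lemma \ref{auxiliary 5}(i) implies each minimal left ideal $S_i$ is annihilated by $\beta(R)$, hence $\beta(R)^2 = \beta(R)(S_1 + S_2) = 0$; consequently both $S_i$ are simple $\Delta$-modules, forcing $S_i \simeq \Delta$ as $R$-modules. Lemma \ref{auxiliary 1}(iv) then counts $|\Delta|+1$ simple submodules of $\beta(R) \simeq \Delta \oplus \Delta$, which together with $\beta(R)$ form the star graph $\mathcal{S}_{|\Delta|+2}$. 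Triangle-freeness in the reverse direction is immediate once the graph is identified as $\mathcal{N}_\alpha$, $\mathcal{K}_1$, $\mathcal{K}_2$, or $\mathcal{S}_\alpha$. The main technical obstacle lies in the length-$3$ chain subcase, where identifying $\beta(R)^2$ demands the counting argument over $R/\beta(R)$ to exclude $\beta(R)^2 = 0$; by contrast, in case (c) the identification $\beta(R)^2 = 0$ falls out cleanly from the Nakayama-type statement in Lemma \ref{auxiliary 5}(i).
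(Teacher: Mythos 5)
Your proof is correct and follows essentially the same route as the paper: reduce via Proposition \ref{triangle free} to the three module-theoretic shapes of ${}_RR$ and then identify the rings case by case, using Wedderburn--Artin in case $(ii)$ and standard facts about $\beta(R)$ in an Artinian ring in cases $(i)$ and $(iii)$. The only differences are cosmetic: you supply the $\Delta$-subspace counting argument that pins down $I_1=\beta(R)^2$ in the length-$3$ chain case (which the paper dismisses as clear), and in case $(iii)$ you obtain $\beta(R)^2=0$ directly from Lemma \ref{auxiliary 5}$(i)$ rather than via the paper's observation that neither $S_i$ contains a nonzero idempotent.
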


\begin{proof}

$(1)\Rightarrow (2)$ We apply Proposition \ref{triangle free} and classic results on Artinian rings.

If $_RR$ is as described in Proposition \ref{triangle free}$(2)(i)$, then  it is clear that the ring $R$ has to be one of the rings described in $(2)(i)$.

Let $_RR$ be as described in Proposition \ref{triangle free}$(2)(ii)$. Then      $R$ is a semisimple ring and Wedderburn-Artin theorem implies that $R$ has to be as in $(2)(ii)$.

 Finally, assume that  $_RR$ is as described in Proposition \ref{triangle free}$(2)(iii)$.
Thus there exist minimal left ideals $S_1,S_2$ of $R$ such that $Soc(R)=S_1\oplus S_2$ is the unique maximal left ideal of $R$. In particular,  $\Delta=R/Soc(R)$ is a division ring.
    Note that none of $S_1, S_2$ can contain a nonzero idempotent. Indeed, if $e$ were a nonzero idempotent in $S_1$, then we would have $S_1=Re$. Hence  $R(1-e)$ would be a maximal left ideal of $R$ different from $S_1\oplus S_2$, which is impossible. Therefore $S_1^2=S_2^2=0$ and, consequently,  $S\beta(R)=S_1\oplus S_2$. Clearly, $\beta (R)S_1=\beta (R)S_2=0$. This  implies that $_\Delta S_1\simeq {_\Delta\Delta}\simeq { _\Delta S}_2$, so also $_RS_1\simeq {_R}S_2$. Moreover, using Lemma \ref{auxiliary 1}$(iv)$, one can see that $G(M)={\mathcal S_{|Iso(S_1, S_2)|+2+1}}={\mathcal S_{|\Delta|+2}}$, as the rings $End(_RS_1), End(_\Delta S_1), \Delta$ are isomorphic.

      The implication $(2)\Rightarrow (1)$ is a direct  consequence of Proposition \ref{triangle free}.
\end{proof}

For a given module $M$, if $G(M)$ is not a null graph, then $M$ contains nontrivial submodules $N_1\subset  N_2$. Now if $N$ is a submodule  of $M$, which is maximal with respect to $N_1\cap N=0$ (it may happen that $N=0$), then $N_1\oplus N$ is a nontrivial  essential submodule of $M$. Then each nontrivial submodule of $M$ is adjacent to $N_1\oplus N$. Consequently arbitrary two distinct nontrivial submodules of $M$ are connected via $N_1\oplus N$ (obviously, they can also be adjacent to each other). Consequently $G(M)$ is connected if and only if $M$ is not the direct sum of two simple modules and in this case the diameter of $G(M)$ is not greater than $2$ (cf. \cite{A1}, Theorems 2.1 and 2.4).

\end{document}